\ifpdf\usepackage[breaklinks=true]{hyperref}\fi
\newtheorem{lemma}{Lemma}
\newtheorem{theorem}{Theorem}
\newcommand{\maps}{\colon} 
\newcommand{\define}[1]{\textbf{\boldmath{#1}}}
\newcommand{\RR}{\ensuremath{\mathbb{R}}}
\newcommand{\CC}{\ensuremath{\mathbb{C}}}
\newcommand{\HH}{\ensuremath{\mathbb{H}}}
\newcommand{\OO}{\ensuremath{\mathbb{O}}}
\newcommand{\KK}{\ensuremath{\mathbb{K}}}
\newcommand{\A}{\mathrm{A}}
\newcommand{\C}{\mathrm{C}}
\newcommand{\D}{\mathrm{D}}
\newcommand{\E}{\mathrm{E}}
\newcommand{\F}{\mathrm{F}}
\newcommand{\J}
	{\left( \begin{matrix} 0 & 1 \\ -1 & 0 \end{matrix} \right)} 
\newcommand{\M}[2]{#1^{#2\times#2}} 
\newcommand{\mat}[1]{\mathbf{#1}}
\newcommand{\AAA}{\mat{A}}
\newcommand{\II}{\mat{I}}
\newcommand{\MM}{\mat{M}}
\newcommand{\PP}{\mat{P}}
\newcommand{\QQ}{\mat{Q}}
\newcommand{\Rmat}{\mat{R}}
\newcommand{\XX}{\mat{X}}
\newcommand{\GG}{\mat{\Gamma}}
\renewcommand{\SS}{\mat{\hbox{\boldmath$\sigma$}}}
\renewcommand{\bar}[1]{\overline{#1}}
\renewcommand{\star}[1]{#1^*}
\newcommand{\inda}{p} 
\newcommand{\indb}{q} 
\newcommand{\indc}{r} 
\newcommand{\Cl}{\mathrm{C}\ell}
\newcommand{\SO}{\mathrm{SO}}
\newcommand{\SU}{\mathrm{SU}}
\newcommand{\SL}{\mathrm{SL}}
\newcommand{\Sp}{\mathrm{Sp}}
\newcommand{\Spin}{\mathrm{Spin}}
\newcommand{\Pin}{\mathrm{Pin}}
\renewcommand{\O}{\mathrm{O}}
\newcommand{\tr}{\mathrm{tr \, }}
\newcommand{\der}{\mathfrak{der}}
\newcommand{\End}{\mathrm{End}}
\newcommand{\sa}{\mathfrak{sa}}
\newcommand{\so}{\mathfrak{so}}
\newcommand{\su}{\mathfrak{su}}
\renewcommand{\sp}{\mathfrak{sp}}
\renewcommand{\sl}{\mathfrak{sl}}
\renewcommand{\aa}{\mathfrak{a}}
\newcommand{\cc}{\mathfrak{c}}
\newcommand{\dd}{\mathfrak{d}}
\newcommand{\f}{\mathfrak{f}}
\newcommand{\e}{\mathfrak{e}}
\newcommand{\vv}{\mathfrak{v}} 
\newcommand{\iso}{\cong}
\newcommand{\tensor}{\otimes}
\renewcommand{\Im}{\mathrm{Im}\,}
\renewcommand{\Re}{\mathrm{Re}\,}
\newcommand{\kap}{\kappa+\kappa'_+,\kappa'_-}
\newcommand{\halfang}{\hbox{\small$\frac\theta2$}}
\newcommand{\Vtwo}{\mathbf{V}_2}
\newcommand{\Vfour}{\mathbf{V}_4}
\newcommand{\Ksuper}{\mathcal{K}}
\newcommand{\Kfour}{\M{\Ksuper}{4}}
\newcommand{\KKprime}{\KK'\otimes\KK}
\begin{document}


\title{
	{\bfseries\boldmath The magic square of Lie groups: \\
		the $2\times2$ case}
}

\author{
	Tevian Dray \\[-2.5pt]
	\normalsize
	\textit{Department of Mathematics, Oregon State University,
		Corvallis, OR  97331 USA} \\[-2.5pt]
	\normalsize
	{\tt tevian{\rm @}math.oregonstate.edu} \\
	\and
	John Huerta \\[-2.5pt]
	\normalsize
	\textit{CAMGSD,
		Instituto Superior T\'ecnico,
		1049-001 Lisbon, PORTUGAL} \\
	\normalsize
	{\tt john.huerta@tecnico.ulisboa.pt} \\
	\and
	Joshua Kincaid \\[-2.5pt]
	\normalsize
	\textit{Department of Physics, Oregon State University,
		Corvallis, OR  97331 USA} \\[-2.5pt]
	\normalsize
	{\tt kincajos{\rm @}math.oregonstate.edu} \\
}

\date{\normalsize 3 May 2014}

\maketitle

\begin{abstract}
A unified treatment of the $2\times2$ analog of the Freudenthal-Tits magic
square of Lie groups is given, providing an explicit representation in terms
of matrix groups over composition algebras.
\end{abstract}

{\small
\textbf{keywords:}
division algebras; magic squares; orthogonal groups; Clifford algebras


\textbf{MSC:} 
22E46, 
17A35, 
15A66  
}


\section{Introduction}

The Freudenthal--Tits magic square~\cite{Freudenthal, Tits} is a $4\times4$
array of semisimple Lie algebras, whose rows and columns are labeled by
composition algebras.  It is magical not only because of its symmetry, but also
because, in the row or column labeled by the octonions or the split octonions,
the square produces four of the five exceptional Lie algebras: $\f_4$, $\e_6$,
$\e_7$ and~$\e_8$.  Several constructions of the magic square are known
\cite{Freudenthal, Tits, Vinberg, SudberyBarton}, all of which take a pair of
composition algebras and produce a Lie algebra.  They provide concise and
elegant constructions of exceptional Lie algebras, and show how the
exceptional Lie algebras are related to the octonions.

This paper forms part of an effort which aims to give a similarly concise and
elegant construction for the exceptional Lie groups, by building a `magic
square of Lie groups'; that is, we want a construction that takes two
composition algebras and produces a Lie group, without the intermediate step
of constructing the Lie algebra.  In this paper, we construct the `$2\times2$
magic square of Lie groups'.  At the Lie algebra level, the `$2\times2$ magic
square' proposed by Barton and Sudbery~\cite{SudberyBarton} is a simpler
cousin of the Freudenthal--Tits magic square, so named because the $3\times3$
matrices used in constructing the usual magic square are replaced by
$2\times2$ matrices.  We emphasize that the labels `$2\times2$' and
`$3\times3$' used throughout this paper refer to the size of the underlying
matrices, and not to the magic squares themselves (which are $4\times4$).

Unlike the original `$3\times3$ magic square', the $2\times2$ magic square
contains no exceptional Lie algebras.  Instead, it consists of special
orthogonal algebras with various signatures.  It serves as a kind of test case
for a similar analysis of the $3\times3$ magic square, since it involves the
noncommutativity of the quaternions and nonassociativity of the octonions
without the further complexity of the exceptional Lie algebras.  Moreover, it
has an intriguing connection to string theory that makes it of interest in its
own right: the first three rows give, in succession, the infinitesimal
rotational, Lorentz, and conformal symmetries of the Minkowski spacetimes
where the classical superstring can be defined.  The octonionic column
corresponds to 10-dimensional spacetime, where the superstring can also be
quantized.

\begin{table}
\begin{center}
\begin{tabular}{|c|c|c|c|c|}
\hline
& $\RR$ & $\CC$ & $\HH$ & $\OO$ \\ 
\hline
$\RR'$ & $\so(3)$ & $\su(3)$ & $\sp(3)$ & $\f_4$ \\
\hline
$\CC'$ & $\sl(3,\RR)$ & $\sl(3,\CC)$ & $\aa_{5(-7)}$ & $\e_{6(-26)}$ \\
\hline
$\HH'$ & $\sp(6,\RR)$ & $\su(3,3) $ & $\dd_{6(-6)}$ & $\e_{7(-25)}$ \\
\hline
$\OO'$ & $\f_{4(4)}$ & $\e_{6(2)}$ & $\e_{7(-5)}$ & $\e_{8(-24)}$ \\
\hline
\end{tabular}
\end{center}
\caption{The $3\times3$ half-split magic square.}
\label{3x3alg}
\end{table}

Our interest in this paper is in the `half-split' magic square, with columns
labeled by normed division algebras and rows by split composition algebras.
To see the patterns we want to explore, first consider the \define{half-split
$3\times3$ magic square} shown in Table~\ref{3x3alg}.
Here, $\sp(3)$ denotes the compact real form of $\cc_3$, whereas $\sp(6,\RR)$
denotes the Lie algebra respecting the usual symplectic form on $\RR^6$.  A
number in parentheses is the signature of the Killing form, which is the
excess of plus signs (``boosts'') over minus signs (``rotations'') in the
diagonalization of this form.  As is well known, the Dynkin diagram and
signature specify a real form completely.


Perhaps the most concise construction of the magic square is due to Vinberg.
Given a pair of composition algebras $\KK'$ and $\KK$, Vinberg's construction
\cite{Vinberg} says the corresponding entry of the magic square will be 
\begin{equation}
\vv_3(\KK',\KK) = \sa_3(\KKprime) \oplus \der(\KK') \oplus \der(\KK) .
\label{Vin3}
\end{equation} 
Here, $\sa_3(\KKprime)$ denotes the set of traceless anti-Hermitian $3\times3$
matrices, $\der(\KK')$ and $\der(\KK)$ are the Lie algebras of derivations on
the composition algebras $\KK'$ and $\KK$, and their sum is a Lie subalgebra.
Since our focus is on the $2\times2$ magic square in this paper, we will not
need to describe the bracket on $\vv_3(\KK',\KK)$, which is given by a
complicated formula that can be found in Barton and
Sudbery~\cite{SudberyBarton}.

Now make note of the pattern in the first two columns of the magic square.  In
what follows, $\KK$ denotes $\RR$ or $\CC$, $\M{\KK}{n}$ denotes the set of $n
\times n$ matrices with entries in $\KK$, and $X^\dagger = \overline{X}^T$, the
conjugate transpose of the matrix $X$.  We observe that:
\begin{itemize} 
\item
In the first row, $\so(3)$ and $\su(3)$ are both Lie algebras of traceless,
anti-Hermitian matrices.  If we define
\begin{equation}
\su(3,\KK) = \{ X\in\M{\KK}{3}\,: \,X^\dagger=-X, \,\tr X=0 \} .
\end{equation}
for $\KK=\RR,\CC$, then $\su(3,\RR)$ is $\so(3)$ and $\su(3,\CC)$ is $\su(3)$.
\item
In the second row, $\sl(3,\RR)$ and $\sl(3,\CC)$ are both Lie algebras of
traceless matrices, that is, they are special cases of
\begin{equation}
\sl(3,\KK) = \{ X\in\M{\KK}{3}\,: \,\tr X=0 \} .
\end{equation}
for $\KK=\RR,\CC$.
\end{itemize}
We can carry our observations further if we note that $\su(3,3)$ preserves an
inner product on~$\CC^6$ that, in a suitable basis, bears a striking
resemblance to a symplectic form:
\begin{equation}\omega(x,y) = x^\dagger \J y ,
\end{equation}
where we regard $x,y\in\CC^6$ as column vectors.  The only difference between
$\omega$ and the usual symplectic structure is that $\omega$ is conjugate
linear in its first slot.  Thus, we see that:
\begin{itemize}
\item
In the third row, $\sp(6,\RR)$ and $\su(3,3)$ are both Lie algebras of the
form
\footnote{We emphasize that $\sp(6,\CC)$ is \emph{not} the usual symplectic
Lie algebra, due to the use of Hermitian conjugation rather than transpose in
its definition.}
\begin{equation}
\sp(6,\KK) = \{ X\in\M{\KK}{6}\,: \,X^\dagger J+JX=0, \,\tr X=0 \}
\end{equation}
for $\KK=\RR,\CC$, where $J$ is the $6\times6$ matrix with block
decomposition $J=\J$.
\end{itemize}

Barton and Sudbery showed how to extend these patterns across the first three
rows by giving definitions of Lie algebras $\su(n,\KK)$, $\sl(n,\KK)$ and
$\sp(2n,\KK)$ that work when $\KK$ is any normed division algebra, provided
$n\leq3$, and for any $n$ when $\KK$ is associative.
\footnote{Barton and Sudbery write $\sa(n,\KK)$ for the Lie algebra we write as
$\su(n,\KK)$.  Moreover, their $\sp(2n,\CC)$ is again \emph{not} the
symplectic algebra, but instead denotes the Lie algebra usually called
$\su(n,n)$.}

\begin{table}
\begin{center}
\begin{tabular}{|c|c|c|c|c|}
\hline
& $\RR$ & $\CC$ & $\HH$ & $\OO$ \\ 
\hline
$\RR'$ & $\su(3,\RR)$ & $\su(3,\CC)$ & $\su(3,\HH)$ & $\su(3,\OO)$ \\
\hline
$\CC'$ & $\sl(3,\RR)$ & $\sl(3,\CC)$ & $\sl(3,\HH)$ & $\sl(3,\OO)$ \\
\hline
$\HH'$ & $\sp(6,\RR)$ & $\sp(6,\CC)$ & $\sp(6,\HH)$ & $\sp(6,\OO)$ \\
\hline
\end{tabular}
\end{center}
\caption{The $3\times3$ magic square, first three rows according to Barton
and Sudbery.}
\label{3x3BS}
\end{table}

When $n=3$, the above algebras reproduce the first three rows of the
$3\times3$ magic square, as shown in Table~\ref{3x3BS}.
Of particular interest, the exceptional Lie algebras are:
\begin{equation}
\su(3,\OO) = \f_4, \quad
\sl(3,\OO) = \e_{6(-26)},
\quad \sp(6,\OO) = \e_{7(-25)} . 
\end{equation}

On the other hand, when $n = 2$, $\su(2,\KK)$, $\sl(2,\KK)$ and $\sp(4,\KK)$
turn out to be orthogonal Lie algebras, namely
\begin{equation}
\su(2,\KK) = \so(\RR'\oplus\KK), \quad
\sl(2,\KK) = \so(\CC'\oplus\KK), \quad
\sp(4,\KK) = \so(\HH'\oplus\KK),
\end{equation}
where the direct sums above are orthogonal direct sums. This leads Barton and
Sudbery to take the \define{half-split $2\times2$ magic square} to be the
square with entry $\so(\KK'\oplus\KK)$ for any split composition algebra
$\KK'$ and normed division algebra $\KK$, as shown in Table~\ref{2x2alg}.
The given signatures follow from adding the signatures of $\KK'$ and $\KK$ in
the orthogonal direct sum.  We will delve further into the properties of
composition algebras later.

\begin{table}
\begin{center}
\begin{tabular}{|c|c|c|c|c|}
\hline
& $\RR$ & $\CC$ & $\HH$ & $\OO$ \\ 
\hline
$\RR'$ & $\so(2)$ & $\so(3)$ & $\so(5)$ & $\so(9)$ \\
\hline
$\CC'$ & $\so(2,1)$ & $\so(3,1)$ & $\so(5,1)$ & $\so(9,1)$ \\
\hline
$\HH'$ & $\so(3,2)$ & $\so(4,2)$ & $\so(6,2)$ & $\so(10,2)$ \\
\hline
$\OO'$ & $\so(5,4)$ & $\so(6,4)$ & $\so(8,4)$ & $\so(12,4)$ \\
\hline
\end{tabular}
\end{center}
\caption{The $2\times2$ magic square.}
\label{2x2alg}
\end{table}


Despite its different appearance, this $2\times2$ magic square really
\emph{is} a cousin of the $3\times3$ magic square.  Barton and Sudbery prove
that each entry of this magic square is given by a construction similar to
Vinberg's, namely
\begin{equation}
\vv_2(\KK',\KK) = \sa_2(\KKprime) \oplus \so(\Im\KK') \oplus \so(\Im\KK) .
\end{equation}
Now, $\sa_2(\KKprime)$ denotes the set of traceless, anti-Hermitian $2\times2$
matrices over $\KKprime$, while $\Im\KK'$ and $\Im\KK$ denote the `imaginary
parts' of $\KK'$ and $\KK$, respectively.  In contrast to Vinberg's
construction of the $3\times3$ magic square, the algebras of derivations have
been replaced with the orthogonal algebras $\so(\Im\KK')$ and $\so(\Im\KK)$.
However, just as for the $3\times3$ magic square, the first three rows can be
expressed in terms of (generalized) unitary, linear, and symplectic algebras,
as shown in Table~\ref{2x2BS}; compare Table~\ref{3x3BS}.

\begin{table}[b]
\begin{center}
\begin{tabular}{|c|c|c|c|c|}
\hline
& $\RR$ & $\CC$ & $\HH$ & $\OO$ \\ 
\hline
$\RR'$ & $\su(2,\RR)$ & $\su(2,\CC)$ & $\su(2,\HH)$ & $\su(2,\OO)$ \\
\hline
$\CC'$ & $\sl(2,\RR)$ & $\sl(2,\CC)$ & $\sl(2,\HH)$ & $\sl(2,\OO)$ \\
\hline
$\HH'$ & $\sp(4,\RR)$ & $\sp(4,\CC)$ & $\sp(4,\HH)$ & $\sp(4,\OO)$ \\
\hline
\end{tabular}
\end{center}
\caption{The $2\times2$ magic square, first three rows.}
\label{2x2BS}
\end{table}

Of particular interest, the octonionic column becomes:
\begin{equation}
\su(2,\OO) = \so(9), \quad \sl(2,\OO) = \so(9,1), \quad \sp(4,\OO) = \so(10,2) .
\end{equation}
These are, respectively, the Lie algebras of infinitesimal rotations, Lorentz
transformations, and conformal transformations for Minkowski spacetime
$\RR^{9,1}$, which is of precisely the dimension where string theory can be
quantized.  This intriguing connection to the octonions is not a coincidence
\cite{FairlieI, Schray, BHsuperI}, but is far from fully understood.

Dray, Manogue and their collaborators have worked steadily to lift Barton and
Sudbery's construction of the Lie algebras $\su(n,\OO)$, $\sl(n,\OO)$ and
$\sp(2n,\OO)$ to the group level.  In the case $n=2$, Manogue and Schray
\cite{Lorentz} gave an explicit octonionic representation of the Lorentz group
$\SO(9,1)$ in 10 spacetime dimensions, and later Manogue and Dray
\cite{Dim,Spin} outlined the implications of this mathematical description for
the description of fundamental particles.  In brief, Manogue and Schray
constructed a group that deserves to be called $\SL(2,\OO)$ that was the
double cover of (the identity component of) $\SO(9,1)$, that is:
\begin{equation}
\SL(2,\OO) \equiv \SO(9,1) .
\label{so91}
\end{equation}
Here we use the symbol ``$\equiv$'' to mean ``isomorphic up to cover''---that
is, we will write $G \equiv H$ to mean the Lie groups $G$ and $H$ have the
same Lie algebra.  Moving one step up in the magic square, if we define
$\SU(2,\OO)$ to be the maximal compact subgroup of $\SL(2,\OO)$, we also get:
\begin{equation}
\SU(2,\OO) \equiv \SO(9) .
\end{equation}
Because all other division algebras are subalgebras of the octonions, these
two constructions fully capture the first two rows of the \define{$2\times2$
magic square of Lie groups} shown in Table~\ref{2x2gp}.

\begin{table}
\begin{center}
\begin{tabular}{|c|c|c|c|c|}
\hline
& $\RR$ & $\CC$ & $\HH$ & $\OO$ \\
\hline
$\RR'$ & $\SU(2,\RR) \equiv \SO(2)$ & $\SU(2,\CC) \equiv \SO(3)$ &
$\SU(2,\HH) \equiv \SO(5)$ & $\SU(2,\OO) \equiv \SO(9)$ \\
\hline
$\CC'$ & $\SL(2,\RR) \equiv \SO(2,1)$ & $\SL(2,\CC) \equiv \SO(3,1)$ &
$\SL(2,\HH) \equiv \SO(5,1) $ & $\SL(2,\OO) \equiv \SO(9,1)$ \\
\hline
$\HH'$ & $\Sp(4,\RR) \equiv \SO(3,2)$ & $\SU(2,2) \equiv \SO(4,2)$ &
$\SO(6,2)$ & $\SO(10,2)$ \\
\hline
$\OO'$ & $\SO(5,4)$ & $\SO(6,4)$ & $\SO(8,4)$ & $\SO(12,4)$ \\
\hline
\end{tabular}
\end{center}
\caption{The $2\times2$ magic square of Lie groups.}
\label{2x2gp}
\end{table}

More recently, Dray and Manogue~\cite{Denver,York} have extended these results
to the exceptional Lie group $\E_6$, using the framework described in more
detail by Wangberg and Dray~\cite{Structure,Sub} and in Wangberg's
thesis~\cite{AaronThesis}.  All of these results rely on the description of
certain groups using matrices over division algebras.  Just as $\SL(2,\OO)$
appears in the second row and last column of the $2\times2$ magic square of
Lie groups, $\E_6$ appears in the corresponding spot of the $3\times3$ magic
square.  Using $\SL(2,\OO)$ to bootstrap the process, Dray, Manogue and
Wangberg define a group that deserves to be called $\SL(3,\OO)$ and prove that:
\begin{equation}
\SL(3,\OO) \equiv \E_{6(-26)}
\label{e6}
\end{equation}
where, again, we take the symbol $\equiv$ to mean ``isomorphic up to cover''.
As before, if we take $\SU(3,\OO)$ to be the maximal compact subgroup of
$\SL(3,\OO)$, we immediately obtain:
\begin{equation}
\SU(3,\OO) \equiv \F_4 .
\end{equation}
Once again, because all other normed division algebras are subalgebras of the
octonions, we obtain the first two rows of the \define{$3\times3$ magic
square of Lie groups}, as shown in Table~\ref{3x3gp}.

\begin{table}
\begin{center}
\begin{tabular}{|c|c|c|c|c|}
\hline
& $\RR$ & $\CC$ & $\HH$ & $\OO$ \\
\hline
$\RR'$ & $\SU(3,\RR)$ & $\SU(3,\CC)$ & $\SU(3,\HH) \equiv \C_3$ &
$\SU(3,\OO) \equiv \F_4$ \\
\hline
$\CC'$ & $\SL(3,\RR)$ & $\SL(3,\CC)$ & $\SL(3,\HH) \equiv \A_{5(-7)}$ &
$\SL(3,\OO) \equiv \E_{6(-26)}$ \\
\hline
$\HH'$ & $\Sp(6,\RR) \equiv \C_{3(3)}$ & $\SU(3,3)$ & $\D_{6(-6)}$ &
$\E_{7(-25)}$ \\
\hline
$\OO'$ & $\F_{4(4)}$ & $\E_{6(2)}$ & $\E_{7(-5)}$ & $\E_{8(-24)}$ \\
\hline
\end{tabular}
\end{center}
\caption{The $3\times3$ magic square of Lie groups.}
\label{3x3gp}
\end{table}

The ultimate goal of this project is to extend the above descriptions from the
first two rows of the magic squares to the remaining two rows, culminating in
new constructions of the exceptional Lie groups $\E_7$ and $\E_8$.  An
additional step in this direction was recently taken by Dray, Manogue, and
Wilson~\cite{Denver2}, who showed that
\begin{equation}
\Sp(6,\OO) \equiv \E_{7(-25)}
\end{equation}
and along the way also that
\begin{equation}
\Sp(4,\OO) \equiv \SO(10,2) ,
\end{equation}
thus completing the interpretation of the third row in both Lie group magic
squares; Wilson~\cite{Wilson} has also recently given a quaternionic
construction of $\E_7$.  But what about the fourth row?

In this paper, we take a different approach, and develop some tools for
working with the entire $2\times2$ magic square at once.  At the Lie algebra
level, recall that this magic square consists of the orthogonal algebras
$\so(\KK\oplus\KK')$, where ``$\oplus$'' denotes the orthogonal direct sum.
We will show how to use composition algebras to talk about the corresponding
Lie groups, in two different ways.

First, using composition algebras, we will construct a module of the Clifford
algebra $\Cl(\KK\oplus\KK')$ on the space of $4\times4$ matrices with
entries in $\KKprime$.  In the standard way, this gives a representation of
$\Spin(\KK\oplus\KK')$ on $\M{(\KKprime)}{4}$.  Identifying a certain
subspace of the $4\times4$ matrices, $\M{(\KKprime)}{4}$, with $\KK \oplus
\KK'$, this representation will restrict to the usual representation of
$\Spin(\KK\oplus\KK')$ on $\KK\oplus\KK'$.

We will then show that each group in the $2\times2$ magic square can be
written in the form $\SU(2,\KKprime)$.  Kincaid and
Dray~\cite{JoshuaThesis,so42} took the first step in providing a composition
algebra description of the third row of the magic squares by showing that
$\SO(4,2) \equiv \SU(2,\HH'\otimes\CC)$.  We extend their work by showing that
$\Spin(\KK\oplus\KK')$ acts on $\KK\oplus\KK'$ just as $\SU(2,\CC)$ acts on
the space of $2\times2$ Hermitian matrices.  We therefore rechristen
$\Spin(\KK\oplus\KK')$ as $\SU(2,\KKprime)$ when working with this
representation.

\section{Composition Algebras}


\begin{figure}
\centering
\begin{minipage}{0.41\textwidth}
  \centering
  \includegraphics[width=5cm]{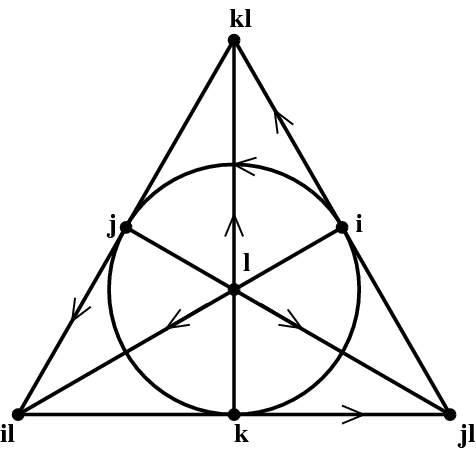}
  \captionof{figure}
    {A graphical representation of the  octonionic multiplication table.}
  \label{omult3}
\end{minipage}
\qquad
\begin{minipage}{0.5\textwidth}
  \centering
  \small
  \begin{tabular}[b]{|c|c|c|c|c|c|c|c|}
  \hline
  &\boldmath$i$&\boldmath$j$&\boldmath$k$&\boldmath$k\ell$
    &\boldmath$j\ell$&\boldmath$i\ell$&\boldmath$\ell$\\\hline
  \boldmath$i$&$-1$&$k$&$-j$&$j\ell$&$-k\ell$&$\ell$&$i\ell$\\\hline
  \boldmath$j$&$-k$&$-1$&$i$&$-i\ell$&$\ell$&$k\ell$&$j\ell$\\\hline
  \boldmath$k$&$j$&$-i$&$-1$&$-\ell$&$i\ell$&$-j\ell$&$k\ell$\\\hline
  \boldmath$k\ell$&$-j\ell$&$i\ell$&$\ell$&$-1$&$i$&$-j$&$-k$\\\hline
  \boldmath$j\ell$&$k\ell$&$\ell$&$-i\ell$&$-i$&$-1$&$k$&$-j$\\\hline
  \boldmath$i\ell$&$\ell$&$-k\ell$&$j\ell$&$j$&$-k$&$-1$&$-i$\\\hline
  \boldmath$\ell$&$-i\ell$&$-j\ell$&$-k\ell$&$k$&$j$&$i$&$-1$\\\hline
  \noalign{\vspace{0.15in}}
  \end{tabular}
  \captionof{table}{The octonionic multiplication table.}
  \label{omult}
\end{minipage}
\end{figure}

\begin{table}
\centering
\small
\begin{tabular}{|c|c|c|c|c|c|c|c|}
\hline
$$&\boldmath$I$&\boldmath$J$&\boldmath$K$
  &\boldmath$KL$&\boldmath$JL$&\boldmath$IL$&\boldmath$L$\\\hline
\boldmath$I$&$-1$&$K$&$-J$&$JL$&$-KL$&$-L$&$IL$\\\hline
\boldmath$J$&$-K$&$-1$&$I$&$-IL$&$-L$&$KL$&$JL$\\\hline
\boldmath$K$&$J$&$-I$&$-1$&$-L$&$IL$&$-JL$&$KL$\\\hline
\boldmath$KL$&$-JL$&$IL$&$L$&$1$&$-I$&$J$&$K$\\\hline
\boldmath$JL$&$KL$&$L$&$-IL$&$I$&$1$&$-K$&$J$\\\hline
\boldmath$IL$&$L$&$-KL$&$JL$&$-J$&$K$&$1$&$I$\\\hline
\boldmath$L$&$-IL$&$-JL$&$-KL$&$-K$&$-J$&$-I$&$1$\\\hline
\end{tabular}
\caption{The split octonionic multiplication table.}
\label{smult}
\end{table}

A \define{composition algebra} $\KK$ is a nonassociative real algebra with a
multiplicative unit 1 equipped with a nondegenerate quadratic form $Q$
satisfying the \define{composition property}:
\begin{equation}
Q(xy) = Q(x) Q(y), \quad x,y \in \KK.
\end{equation} 
A composition algebra for which $Q$ is positive definite is called a
\define{normed division algebra}.  On the other hand, when $Q$ is indefinite,
$\KK$ is called a \define{split composition algebra}.  In the latter case, it
was shown by Albert~\cite{Albert} that the quadratic form $Q$ must be
`split'.  Recall that the \define{signature} of a quadratic form is the excess
of plus signs over minus signs in its diagonalization.  A nondegenerate
quadratic form on a real vector space is \define{split} if its signature is as
close to 0 as possible: 0 for an even dimensional space, and $\pm 1$ for an
odd dimensional space.

By a theorem of Hurwitz~\cite{Hurwitz}, there are exactly four normed division
algebras: the real numbers $\RR$, the complex numbers $\CC$, the quaternions
$\HH$, and the octonions $\OO$.  Similarly, there are exactly four split
composition algebras: the real numbers
\footnote{The real numbers appear in both lists, as only a one-dimensional
space can have a quadratic form both positive definite and split.}
$\RR'=\RR$, the split complex numbers $\CC'$, the split quaternions $\HH'$,
and the split octonions $\OO'$.  In either case, these algebras have
dimensions 1, 2, 4, and 8, respectively.

Let us sketch the construction of the normed division algebras and their split
cousins.  Because the octonions and the split octonions contain all the other
composition algebras as subalgebras, we will invert the usual order and
construct them first. 

The \define{octonions} $\OO$ are the real algebra spanned by the multiplicative
unit 1 and seven square roots of $-1$:
\begin{equation}
\OO = \mathrm{span}\{ 1,i,j,k,k\ell,j\ell,i\ell,\ell \} .
\end{equation}
The basis elements besides $1$ are called \define{imaginary units}.  The
products of these imaginary units are best encapsulated in a figure known as
the Fano plane, equipped with oriented edges, as shown in Figure~\ref{omult3}.
Here, the product of any two elements is equal to the third element on the same
edge, with a minus sign if multiplying against orientation.  For instance:
\begin{equation}
j(i\ell) = k\ell = - (i\ell)j .
\end{equation}
As we alluded to above, the square of any imaginary unit is $-1$.  These rules
suffice to multiply any pair of octonions; the imaginary units
$k\ell,j\ell,i\ell$ are precisely the products suggested by their names.  The
full multiplication table is given in Table~\ref{omult}.

All other normed division algebras are subalgebras of $\OO$.  The real numbers
$\RR$ are the subalgebra spanned by $1$, the \define{complex numbers}
$\CC$ are the subalgebra spanned by $\{1, i\}$, and the \define{quaternions}
$\HH$ are the subalgebra spanned by $\{1,i,j,k\}$.  Of course, there are many
other copies of $\CC$ and $\HH$ in $\OO$.
This construction can be reversed, using the Cayley--Dickson
process~\cite{Schafer}; as vector spaces, we have
\begin{equation}
\CC=\RR\oplus\RR i, \quad \HH=\CC\oplus\CC j, \quad \OO=\HH\oplus\HH\ell .
\end{equation}

\define{Conjugation} is the linear map on $\OO$ which fixes 1 and sends every
imaginary unit to its negative.  It restricts to an operation on $\RR$, $\CC$
and $\HH$, also called conjugation, which is trivial on $\RR$, and coincides
with the usual conjugation on $\CC$ and $\HH$.  For an arbitrary octonion
$x\in\OO$, we write its conjugate as $\bar{x}$.  We define the \define{real}
and \define{imaginary} part of $x$ with the usual formulas,
\begin{equation}
\Re(x) = \frac{x + \bar{x}}{2}, \quad \Im(x) = \frac{x - \bar{x}}{2} ,
\end{equation}
and we say that $x$ is \define{real} or \define{imaginary} if it is equal to
its real or imaginary part, respectively.  The set of all imaginary octonions
is denoted $\Im\OO$.  Our notation and terminology for the other normed
division algebras is similar.

We can show that for a pair of octonions $x, y \in \OO$, conjugation satisfies
$\bar{xy} = \bar{y} \> \bar{x}$.  The quadratic form on $\OO$ is defined by:
\begin{equation}
Q(x) = x\bar{x} = \bar{x}x .
\end{equation}
We will also write $Q(x)$ as $|x|^2$.  Polarizing, we see the quadratic form
comes from the inner product:
\begin{equation}
(x,y) = \Re(x\bar{y}) = \Re(\bar{x}y).
\end{equation}
Moreover, a straightforward calculation shows that $1$ and the imaginary units
are orthonormal with respect to this inner product.  Explicitly, if
\begin{equation}
a = a_1 1 + a_2 i +  a_3 j + a_4 k + a_5 k\ell + a_6 j \ell + a_7 i \ell + a_8 \ell\end{equation}
we have
\begin{equation}
|a|^2 = a_1^2 + a_2^2 + a_3^2 +  a_4^2 + a_5^2 + a_6^2 + a_7^2 + a_8^2,
\end{equation}
so the quadratic form is positive definite.  Finally, it follows from the
definition that the quadratic form satisfies the composition property:
\begin{equation}
|xy|^2 = |x|^2 |y|^2, \quad  x,y \in \OO .
\end{equation}
Thus, $\OO$ is a normed division algebra, as promised.  The quadratic form and
inner product restrict to the other normed division algebras, and we use the
same notation.

The \define{split octonions} $\OO'$ are the real algebra spanned by the
multiplicative unit 1 and three square roots of $-1$, and four square roots of
$+1$:
\begin{equation}
\OO' = \mathrm{span}\{1,I,J,K,KL,JL,IL,L\} .
\end{equation}
The basis elements besides $1$ are again called \define{imaginary units}.  The
products of these imaginary units are given in Table~\ref{smult}.

All other split composition algebras are subalgebras of $\OO'$.  The
\define{split real numbers $\RR'$} are the subalgebra spanned by $1$, the
\define{split complex numbers} $\CC'$ are the subalgebra spanned by $\{1,
L\}$, and the \define{split quaternions} $\HH'$ are the subalgebra spanned by
$\{1,L,K,KL\}$.  Of course, there are many other copies of $\CC'$ and $\HH'$
in $\OO'$.  Finally, the split real numbers, split complex numbers and split
quaternions have more familiar forms, namely
\begin{equation}
\RR' = \RR, \quad \CC' \iso \RR\oplus\RR, \quad \HH' \iso \M{\RR}{2} .
\end{equation}
In other words, the split reals are just the reals, the split complexes are
isomorphic to the algebra $\RR\oplus\RR$ with multiplication and addition
defined componentwise, and the split quaternions are isomorphic to the algebra
of real $2\times2$ matrices.  Again, this construction can be reversed using
the Cayley--Dickson process; as vector spaces, we have
\begin{equation}
\CC'=\RR\oplus\RR L, \quad \HH'=\CC\oplus\CC L, \quad \OO'=\HH\oplus\HH L
\end{equation}
(where these copies of $\RR$, $\CC$, and $\HH$ live in $\OO'$, not $\OO$).

Conjugation, real part and imaginary part are defined in exactly the same way
for $\OO'$ as for $\OO$, but we will write the conjugate of $X \in \OO'$ as
$\star{X}$. The quadratic form on $\OO'$ is:
\begin{equation}
Q(X) = X\star{X} = \star{X}X .
\end{equation}
We will also write this form as $|X|^2$, even though it is not positive
definite.  Polarizing, we see the quadratic form comes from the inner product:
\begin{equation}
(X,Y) = \Re(X \star{Y}) = \Re(\star{X} Y).
\end{equation}
Moreover, a straightforward calculation shows that $1$ and the imaginary units
are orthogonal with respect to this inner product.  Explicitly, if
\begin{equation}
A = A_1 1 + A_2 I + A_3 J + A_4 K + A_5 KL + A_6 J L + A_7 I L + A_8 L
\end{equation}
we have
\begin{equation}
|A|^2 = A_1^2 + A_2^2 + A_3^2 +  A_4^2 - A_5^2 - A_6^2 - A_7^2 - A_8^2 ,
\end{equation}
so the quadratic form has split signature.  Finally, it follows from the
definition that the quadratic form satisfies the composition property.  Thus
$\OO'$ is a split composition algebra, as claimed.  The quadratic form and
inner product restrict to the other split composition algebras, and we use the
same notation.

As is well known, the octonions are not associative, but they are
\define{alternative}.  This means that any triple product of two elements
associates:
\begin{equation}
(xx)y = x(xy), \quad (xy)x = x(yx), \quad (yx)x = y(xx), \quad x,y \in \OO .
\end{equation}
Equivalently, by Artin's theorem~\cite{Schafer}, any subalgebra generated by
at most two elements is associative.  These relations also hold for the split
octonions, and trivially in the other composition algebras, which are
associative.

In what follows, we will work with the algebra $\OO'\otimes\OO$ and its
subalgebras $\Ksuper=\KKprime$, where $\KK$ is any of the division algebras
$\RR$, $\CC$, $\HH$, $\OO$, and $\KK'$ any of their split versions.
Multiplication in $\KKprime$ is defined in the usual way:
\begin{equation}
(A \tensor a) (B \tensor b) = AB \tensor ab ,
\end{equation}
for $A \tensor a, B \tensor b \in \KKprime$.  Conjugation in $\KKprime$ is
defined to conjugate each factor:
\begin{equation}
\bar{A \tensor a} = \star{A} \tensor \bar{a} . 
\end{equation}
We let $\kappa=|\KK|=1,2,4,8$, and for $\KK'$ we keep track separately of the
number of positive-normed basis units, $\kappa'_+=1,1,2,4$, and
negative-normed basis units, $\kappa'_-=0,1,2,4$, with
$\kappa'_++\kappa'_-=|\KK'|$.

\section{\boldmath The Clifford Algebra $\Cl(\kap)$}
\label{Clifford}

We now introduce our principal tool: a representation of the Clifford algebra
$\Cl(\kap)$ using matrices over composition algebras.  Because Clifford
algebras can be used to construct spin groups in a well-known fashion, this
will allow us to construct the groups of the $2\times2$ magic square.

To begin, let us write the vector space $\KK'\oplus\KK$ using $2\times2$
matrices:
\begin{equation}
\label{defx}
\Vtwo = \left\{ \begin{pmatrix}A&\bar{a}\\a&-\star{A}\end{pmatrix} : 
	a \in \KK, A \in \KK' \right\} 
\end{equation}
When not stated otherwise, we assume $\KK'=\OO'$ and $\KK=\OO$, as all other
cases are special cases of this one.
The nice thing about this representation is that the negative of the
determinant on $\Vtwo$ coincides with the norm on $\KK'\oplus\KK$:
\begin{equation}
|\XX|^2 = -\det(\XX) = -(-A\star{A} - \bar{a}a) = |A|^2 + |a|^2 
\label{det}
\end{equation}
Clearly, this norm has signature $(\kap)$, so both $\SO(\kap)$ and its double
cover, the spin group $\Spin(\kap)$ will act on $\Vtwo$.  In the next section,
we will see how to write this representation using matrices over the
composition algebras, thanks to our Clifford representation.

There is a similar construction using the vector space
\begin{equation}
J = \left\{ \begin{pmatrix}A&\bar{a}\\a&\star{A}\end{pmatrix} :
	a \in \KK, A \in \KK' \right\} 
\label{Jordan}
\end{equation}
which provides another representation of $\KK'\oplus\KK$ (as a vector
space).  Remarkably, matrices of the form~(\ref{Jordan}), unlike those of the
form~(\ref{defx}), close under multiplication; not only do such matrices
satisfy their characteristic equation, the resulting algebra is a Jordan
algebra.

Consider now $4\times4$ matrices of the form
\begin{equation}
\PP
  = \Gamma(\XX)
  = \begin{pmatrix}0&\XX\\\widetilde{\XX}&0\end{pmatrix}
\label{Pdef}
\end{equation}
where tilde represents trace reversal,
\begin{equation}
\widetilde{\XX} = \XX - \text{tr}(\XX)\,\II,
\end{equation}
and where the map $\Gamma$ is implicitly defined by~(\ref{Pdef}).  A
straightforward computation using the commutativity of $\KK$ with $\KK'$ shows
that
\begin{equation}
\{\PP,\QQ\} = \PP\QQ + \QQ\PP = 2g(\PP,\QQ)\,\II
\label{CliffordID}
\end{equation}
where $g$ is the inner product obtained by polarizing
$g(\Gamma(\XX),\Gamma(\XX))=-\det(\XX)$ and $\II$ is the identity matrix.
These are precisely the anticommutation relations necessary to give a
representation of the real Clifford algebra $\Cl(12,4)$ (in the case of
$\OO'\otimes\OO$), and $\Cl(\kap)$ in general.

We would therefore like to identify $\PP$ as an element of the Clifford
algebra $\Cl(\kap)$.  However, Clifford algebras are associative, so our
algebra must also be associative.  Since the octonions are not associative,
neither are matrix algebras over the octonions, at least not as matrix
algebras.  The resolution to this puzzle is to always consider octonionic
``matrix algebras'' as linear transformations acting on some vector space, and
to use composition, rather than matrix multiplication, as the product operation.
This construction always yields an associative algebra, since composition
proceeds in a fixed order, from the inside out.

Let's start again.  Recall that $\Ksuper=\KKprime$, and consider the
space $\End(\Kfour)$ of linear maps on $\Kfour$, the set of \hbox{$4\times4$}
matrices with elements in $\Ksuper$.  The matrix $\PP$ can be identified with
the element $\PP_L\in\End(\Kfour)$, where
\begin{equation}
\PP_L(\QQ) = \PP\QQ
\label{PLdef}
\end{equation}
for $\QQ\in\Kfour$.  We have therefore constructed a map $\Gamma_L$
from $\Vtwo$ to $\End(\Kfour)$, given by
\begin{equation}
\Gamma_L(\XX) = \PP_L
\end{equation}
where $\XX$, $\PP$, and $\PP_L$ are defined by~(\ref{defx}), (\ref{Pdef}),
and~(\ref{PLdef}), respectively.  Multiplication in $\End(\Kfour)$ is given by
composition and is associative; under this operation, we claim that the vector
space $\Vfour=\Gamma_L(\Vtwo)$ generates the Clifford algebra
$\Cl(\Vfour)=\Cl(\kap)$, as we now show.

%

\begin{lemma}
If $\PP_L\in\Gamma_L(\Vtwo)$, then
\begin{equation}
(\PP_L)^2 = (\PP^2)_L
\end{equation}
that is, for any $\QQ\in\Kfour$,
\begin{equation}
\PP(\PP\QQ) = \PP^2\QQ
\label{Palt}
\end{equation}
\label{Paltlemma}
\end{lemma}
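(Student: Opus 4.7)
The plan is to reduce the identity $\PP(\PP\QQ) = (\PP^2)\QQ$ to an entrywise check, and then to see that all associator obstructions cancel using only the alternative property of $\OO$ and $\OO'$ together with the commutation between the two tensor factors of $\KKprime$.

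First, I would compute $\PP^2$ directly. Because $A\in \KK'\otimes 1$ commutes with $a,\bar{a}\in 1\otimes \KK$, a short block calculation yields $\XX\widetilde{\XX} = \widetilde{\XX}\XX = (|A|^2+|a|^2)\,\II_{2\times 2} = |\XX|^2\,\II_{2\times 2}$, and hence
\[
\PP^2 \;=\; |\XX|^2\,\II_{4\times 4}.
\]
Since $|\XX|^2$ is real, $(\PP^2)_L(\QQ) = |\XX|^2\,\QQ$, so it suffices to prove that $\PP(\PP\QQ) = |\XX|^2\,\QQ$ for every $\QQ\in\Kfour$.

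Next, I would expand entrywise. The $(i,\ell)$ entry of $\PP(\PP\QQ) - \PP^2\QQ$ equals $-\sum_{j,k}[\PP_{ij},\PP_{jk},\QQ_{k\ell}]$, where $[x,y,z] = (xy)z - x(yz)$. Since $\PP$ has only two nonzero entries per row, each such sum reduces to just four associator terms, and by the block symmetry of $\PP$ it is enough to treat a representative row, say $i=1$. These four associators split into two kinds: type (i), terms $[A,\star{A},\QQ_{1\ell}]$ and $[\bar{a},a,\QQ_{1\ell}]$, whose leading pair lies in a single tensor slot; and type (ii), a mixed pair $[A,\bar{a},\QQ_{2\ell}] - [\bar{a},A,\QQ_{2\ell}]$ that crosses the two slots. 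Type (i) terms vanish by alternativity: writing $\QQ_{1\ell} = \sum q'_m\otimes q_m$, the $\OO'$-alternative law gives $A(\star{A}q'_m) = (A\star{A})q'_m$ termwise (and similarly in $\OO$), collapsing each associator to zero. The mixed pair vanishes because $A$ and $\bar{a}$ act in independent tensor slots, so $A(\bar{a}\QQ_{2\ell}) = \bar{a}(A\QQ_{2\ell})$ after expanding along $\KK'\otimes\KK$.

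The hard part will be bookkeeping rather than algebra: $\KKprime$ itself is neither alternative nor associative, so no alternative-type identity can be invoked on an arbitrary element of it. The essential technical move is to split each entry of $\QQ$ along the tensor decomposition $\KK'\otimes\KK$ \emph{before} applying alternativity, so that the alternative law is only ever used within $\OO'$ or within $\OO$, where it genuinely holds; once that discipline is maintained, the remaining verifications for the other three rows follow by the same pattern.
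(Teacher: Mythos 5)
The paper's proof is a one-liner (``Direct computation, using the alternativity of both $\KK'$ and $\KK$''), and your proposal correctly fills in that computation: you reduce to vanishing of the entrywise associators $[\PP_{ij},\PP_{jk},\QQ_{k\ell}]$, split them into same-slot terms (killed by alternativity/Artin within $\OO'$ or $\OO$ separately) and cross-slot terms (killed because elements in different tensor factors associate with anything, by the definition of multiplication in $\KK'\otimes\KK$). This is the same approach as the paper, with the bookkeeping made explicit.
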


\begin{proof}\vspace{-0.25in}
Direct computation, using using the alternativity of both $\KK'$ and $\KK$.
\end{proof}

\goodbreak
\begin{theorem}
The subalgebra of $\End(\Kfour)$ generated by $\Gamma_L(\Vtwo)$ is a Clifford
algebra, that is, $\Cl(\Vfour)=\Cl(\kap)$.
\label{thm1}
\end{theorem}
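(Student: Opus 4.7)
The plan is to realize the generated subalgebra as a quotient of $\Cl(\kap)$ via the universal property of Clifford algebras, and then to argue that the quotient map is in fact an isomorphism.

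First I would upgrade Lemma \ref{Paltlemma} to the full Clifford anticommutation relation inside $\End(\Kfour)$. Specializing (\ref{CliffordID}) to $\QQ=\PP$ gives $\PP^2=g(\PP,\PP)\II$ in $\Kfour$, so Lemma \ref{Paltlemma} yields
\[
(\PP_L)^2 = (\PP^2)_L = g(\PP,\PP)\,\mathrm{id}
\]
in $\End(\Kfour)$. Since $\Gamma_L$ is linear, applying this identity to $\PP+\QQ$ and polarizing gives
\[
\PP_L \QQ_L + \QQ_L \PP_L = 2g(\PP,\QQ)\,\mathrm{id} .
\]
This is the essential move: although (\ref{CliffordID}) involves matrix multiplication in the nonassociative algebra $\Kfour$, Lemma \ref{Paltlemma} is precisely what transports the Clifford relation into the associative algebra $\End(\Kfour)$ under composition.

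Next, by the universal property of the Clifford algebra, $\Gamma_L$ extends uniquely to an associative algebra homomorphism $\varphi\maps\Cl(\kap)\to\End(\Kfour)$, whose image is by construction the subalgebra generated by $\Vfour=\Gamma_L(\Vtwo)$. Thus $\varphi$ is surjective onto the generated subalgebra, and the theorem reduces to verifying that $\varphi$ is injective, i.e.\ that no further relations beyond the Clifford relations hold among the $\PP_L$.

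The hard part will be injectivity. For most signatures $(\kap)$, the algebra $\Cl(\kap)$ is simple (this fails only when $\kappa+\kappa'_+-\kappa'_-\equiv 1\pmod 4$), in which case any nonzero algebra homomorphism out of it is automatically injective, and $\Gamma_L$ being nonzero on $\Vtwo$ suffices. For the remaining exceptional signatures $\Cl(\kap)$ splits as a direct sum of two simple factors, and one must separately rule out that $\varphi$ factors through a single summand, e.g.\ by examining the action of the volume element on $\Kfour$. A cleaner, signature-independent alternative is a direct dimension count: pick an orthonormal basis $\PP_1,\dots,\PP_n$ of $\Vfour$, then show that the $2^n$ ordered monomials $\PP_{i_1,L}\cdots\PP_{i_k,L}$ act as linearly independent operators on $\Kfour$, matching $\dim\Cl(\kap)=2^n$ and forcing $\varphi$ to be an isomorphism in every case.
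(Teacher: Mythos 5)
Your opening move mirrors the paper: you obtain the Clifford anticommutation relation in $\End(\Kfour)$ by squaring and polarizing, using Lemma~\ref{Paltlemma} exactly as the paper polarizes~(\ref{GammaSq}) to reach~(\ref{CliffordID3}), and then you invoke the universal property of Clifford algebras in the associative setting. Where you go beyond the paper is in separating the resulting map $\varphi\maps\Cl(\kap)\to\End(\Kfour)$ into a surjectivity claim (onto the generated subalgebra, which is automatic) and an injectivity claim, and observing that only the latter is nontrivial. The paper's own proof does not address injectivity at all: it stops once the Clifford relation and the associativity of $\End(\Kfour)$ are in hand, tacitly identifying the generated subalgebra with $\Cl(\kap)$ rather than with a possible proper quotient. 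Your two proposed routes to injectivity --- simplicity of $\Cl(\kap)$ when $\kappa+\kappa'_+-\kappa'_-\not\equiv 1\pmod 4$, and, in the remaining signatures, either checking that the volume element $\GG_1\cdots\GG_n$ does not act as a scalar on $\Kfour$ or counting the $2^n$ independent ordered monomials --- are both standard and would close the argument, though you leave each as a sketch rather than a computation. In short, your proposal follows the paper's core computation and is, if anything, more careful on the final identification; it helpfully flags a step the paper glosses over, but to be fully complete it would still need the volume-element (or dimension-count) check carried out in the non-simple cases.
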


\begin{proof}
Since
\begin{equation}
\Gamma(\XX)^2 = -\det(\XX)\,\II
\label{GammaSq}
\end{equation}
we also have
\begin{equation}
\Gamma_L(\XX)^2 = |\XX|
\label{CliffordID4}
\end{equation}
where $|\XX|=-\det(\XX)$, and where there is an implicit identity operator on
the right-hand side of~(\ref{CliffordID4}).  We can now polarize either of
these expressions to yield
\begin{equation}
\PP(\QQ\Rmat) + \QQ(\PP\Rmat)
 = (\PP\QQ+\QQ\PP)\,\Rmat
 = 2g(\PP,\QQ)\,\Rmat
\label{CliffordID2}
\end{equation}
with $\PP,\QQ,\Rmat\in\Kfour$.  That is, we have
\begin{equation}
\{\PP_L,\QQ_L\} = \{\PP,\QQ\}_L
\label{CliffordID3}
\end{equation}
which, together with the Clifford identity~(\ref{CliffordID}) and the
associativity of $\End(\Kfour)$, can now be used to establish that the
algebra generated by $\Gamma_L(\Vtwo)$ is the Clifford algebra $\Cl(\Vfour)$.
\end{proof}

\section{Spin groups from composition algebras}
\label{ortho}

Representations of Clifford algebras yield representations of the
corresponding orthogonal groups, or at least their double cover, using a
well-known construction.  Applying this to our representation of $\Cl(\kap)$
gives us a representation of $\Spin(\kap)$ using matrices over composition
algebras.  Our use of nonassociative algebras in our representation
requires care, yet we shall see that we are in the best possible situation:
the action of generators of $\Spin(\kap)$ can be expressed entirely in terms
of matrix multiplication over composition algebras, associated in a fixed
order.

First, let us give a brief overview of the general construction.  Given a
vector space $V$ equipped with quadratic form, the unit vectors generate a
subgroup of $\Cl(V)$ called the \define{pin group} $\Pin(V)$.  This group is
the double cover of $\O(V)$, which means there is a 2-to-1 and onto
homomorphism
\begin{equation}
R \maps \Pin(V) \to \O(V) .
\label{R1}
\end{equation}
The \define{spin group} $\Spin(V)$ is the subgroup of $\Pin(V)$ generated by
products of pairs of unit vectors.  It is the double cover of $\SO(V)$, which
means there is a 2-to-1 and onto homomorphism
\begin{equation}
R \maps \Spin(V) \to \SO(V) .
\label{R2}
\end{equation}
The map~(\ref{R2}) is just the restriction of~(\ref{R1}) to $\Spin(V)$, so we
give it the same name.

We will describe $R$ by saying what it does to generators.  Let $w$ be a unit
vector, that is, a vector $w\in V$ such that $|w|^2=\pm1$, where $|w|^2$
denotes the action of the quadratic form on $w$.  Define $R_w \maps V \to V$
to be the \define{reflection along $w$}: the linear map taking $w$ to $-w$ and
fixing the hyperplane orthogonal to $w$.  At the heart of the connection
between Clifford algebras and geometry, we have the fact that $R_w$ can be
written solely with operations in the Clifford algebra:
\begin{equation}
R_w (v) = -wvw^{-1}
\label{reflect}
\end{equation}
Checking this using the Clifford relation is a straightforward calculation,
which we nonetheless do here because it plays a role in what follows.
Clearly, $R_w(w)=-w$.  If $v$ is orthogonal to $w$, the Clifford relation tells
us $wv+vw=0$, so $w$ and $v$ anticommute, and $R_w(v)=v$.  Hence, $R_w$ is the
unique linear map taking $w$ to $-w$ and fixing the hyperplane orthogonal to
$w$.

In fact, $R$ extends from a map on the generators of $\Pin(V)$, taking $w$ to
$R_w$, to a homomorphism.  This homomorphism is 2-to-1, as suggested by the
fact that $R_w=R_{-w}$.  Since it is well known that $\O(V)$ is generated by
reflections of the form $R_w$, and $\SO(V)$ by products of pairs of these,
this homomorphism is clearly onto.

In~(\ref{reflect}), we expressed reflection using Clifford multiplication of
vectors.  Yet the endomorphisms in $\Vfour=\Gamma_L(\Vtwo)$ correspond to
$4\times4$ matrices in $\Gamma(\Vtwo)$, so we can also multiply them as
matrices, although this product is no longer associative.  Remarkably, thanks
to the matrix form of the Clifford relation, this gives us another way to
express reflections.

We begin by noting that the elements of $\XX$, and hence of $\PP$, commute,
since they jointly contain only one independent imaginary direction in each of
$\KK$ and $\KK'$.  Thus, there is no difficulty defining the determinants of
these matrices as usual.  Since $\PP^2$ is proportional to the identity matrix
by~(\ref{GammaSq}), computing
\begin{equation}
\det\bigl(\Gamma(\XX)\bigr) = (\det\XX)^2
\end{equation}
shows that $\PP^{-1}$ is proportional to $\PP$ so long as $\det\PP\ne0$.

\begin{lemma}
Let $\PP,\QQ\in\Gamma(\Vtwo)$, with $\det\PP\ne0$.  Then
\begin{equation}
(\PP \QQ) \PP^{-1} = \PP ( \QQ \PP^{-1} )
\label{PQP}
\end{equation}
and this matrix, which we denote $\PP\QQ\PP^{-1}$, also lies in $\Gamma(\Vtwo)$.
\end{lemma}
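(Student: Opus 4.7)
The plan is to exploit the fact that $\PP^{-1}$ is proportional to $\PP$ itself, which collapses the associativity claim onto an identity involving only $\PP$ and $\QQ$ and places the result in $\Gamma(\Vtwo)$ essentially by inspection.

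By equation~(\ref{GammaSq}), $\Gamma(\XX)^2=-\det(\XX)\,\II$. The hypothesis $\det\PP=(\det\XX)^2\ne0$ forces $\det\XX\ne0$ and gives
\begin{equation}
\PP^{-1} = \frac{-1}{\det(\XX)}\,\PP .
\end{equation}
Since $\Vtwo$ is closed under real scalar multiplication and $\Gamma$ is real-linear, this already shows $\PP^{-1}\in\Gamma(\Vtwo)$. Both products in~(\ref{PQP}) are then real scalar multiples of triple products of Clifford generators, so the associativity claim reduces to the identity
\begin{equation}
(\PP\QQ)\PP = \PP(\QQ\PP).
\end{equation}

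I would verify this identity by expanding each side using the Clifford relation~(\ref{CliffordID}):
\begin{equation}
(\PP\QQ)\PP = 2g(\PP,\QQ)\,\PP - (\QQ\PP)\PP,\qquad \PP(\QQ\PP) = 2g(\PP,\QQ)\,\PP - \PP(\PP\QQ).
\end{equation}
Lemma~\ref{Paltlemma} handles the last term on the right: $\PP(\PP\QQ)=\PP^2\QQ=-\det(\XX)\,\QQ$. The remaining task is to establish the mirror identity $(\QQ\PP)\PP=\QQ\PP^2$. This is the main obstacle, because Lemma~\ref{Paltlemma} supplies only the left-alternative version and its right-alternative analogue is logically independent in our non-associative setting. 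I expect to prove it by a direct computation strictly parallel to that of Lemma~\ref{Paltlemma}, expanding $\PP$ and $\QQ$ into $2\times 2$ blocks and invoking the right-alternative laws of $\KK$ and $\KK'$ on the single imaginary direction each contributes to $\PP$.

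With both alternative laws in hand the two expansions agree, so (\ref{PQP}) holds, and
\begin{equation}
\PP\QQ\PP^{-1} = \frac{-1}{\det(\XX)}\bigl(2g(\PP,\QQ)\,\PP + \det(\XX)\,\QQ\bigr) = -\QQ - \frac{2g(\PP,\QQ)}{\det(\XX)}\,\PP,
\end{equation}
which is a real linear combination of $\PP,\QQ\in\Gamma(\Vtwo)$ and therefore lies in $\Gamma(\Vtwo)$. As a sanity check, up to the overall sign in~(\ref{reflect}) this is exactly the reflection $R_\PP(\QQ)$, which provides a geometric reason that the operation stays inside $\Gamma(\Vtwo)$.
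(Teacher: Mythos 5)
Your proof is correct in outline but takes a genuinely different path from the paper. The paper's argument is much shorter: since $\PP^{-1}$ is proportional to $\PP$, the entries of $\PP$, $\QQ$, and $\PP^{-1}$ jointly involve only two independent imaginary directions in each of $\KK$ and $\KK'$, and Artin's theorem (any subalgebra of an alternative algebra generated by two elements is associative) then kills all associativity issues at once; the membership $\PP\QQ\PP^{-1}\in\Gamma(\Vtwo)$ is settled by a direct check. You instead run the Clifford relation~(\ref{CliffordID}) through both parenthesizations, reduce to Lemma~\ref{Paltlemma} and its right-handed mirror $(\QQ\PP)\PP=\QQ\PP^2$, and in the process extract the explicit identity
\begin{equation}
\PP\QQ\PP^{-1} \;=\; -\QQ \;-\; \frac{2g(\PP,\QQ)}{\det(\XX)}\,\PP ,
\end{equation}
which makes the $\Gamma(\Vtwo)$-membership transparent (a real linear combination of $\PP$ and $\QQ$) rather than deferring it to an unstated ``direct computation,'' and which is recognisably the reflection formula~(\ref{reflect}). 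That is a real advantage of your route: it proves more, and it ties the lemma to the geometry of the next section. The cost is that you must supply the right-alternative analogue of Lemma~\ref{Paltlemma}, which you correctly flag as the remaining obligation. Two comments there. First, you call it ``logically independent''; it is not really so in this context, since alternativity in the paper's sense already carries left-, right-, and flexible laws, and the computation is the literal mirror of the one proving Lemma~\ref{Paltlemma}. Second, and more efficiently, you never need the mirror lemma in its full generality $\QQ\in\Kfour$: for the present lemma $\QQ$ lies in $\Gamma(\Vtwo)$, and the paper's two-imaginary-directions/Artin observation already yields $(\QQ\PP)\PP=\QQ\PP^2$ in that case with no new computation. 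Citing that would close your gap immediately while keeping the explicit linear-combination formula, which is the genuinely new content your approach adds.
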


\begin{proof}
By the discussion above, $\PP^{-1}$ is proportional to $\PP$, so that the
elements of $\PP$, $\QQ$, and~$\PP^{-1}$ jointly contain only two independent
imaginary directions in each of $\KK$ and $\KK'$.  Thus, there are no
associativity issues when multiplying these matrices, which
establishes~(\ref{PQP}).  Direct computation further establishes the fact that
$\PP\QQ\PP^{-1}\in\Gamma(\Vtwo)$.
\end{proof}

\begin{lemma}
Let $\PP,\QQ\in\Gamma(\Vtwo)$ with $|\PP|=1$, so that $\PP_L,\QQ_L\in\Vfour$.
Then
\begin{equation}
R_{\PP_L}(\QQ_L) = - \left( \PP\QQ\PP^{-1} \right)_L .
\end{equation}
\label{Rlemma}
\end{lemma}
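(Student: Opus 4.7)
The plan is to show that the two sides of the claimed equality agree as elements of $\End(\Kfour)$ by evaluating both on an arbitrary $\Rmat\in\Kfour$ and reducing them to the same explicit expression. Start from the standard Clifford-algebraic reflection formula~\eqref{reflect}, which in the Clifford algebra generated by $\Gamma_L(\Vtwo)$ gives $R_{\PP_L}(\QQ_L) = -\PP_L\QQ_L\PP_L^{-1}$, the product being composition of operators in $\End(\Kfour)$. Since $\PP_L^{-1}$ acts as left multiplication by $\PP^{-1}\in\Gamma(\Vtwo)$ (using the fact from~\eqref{GammaSq} that $\PP$ is invertible with $\PP^{-1}$ proportional to $\PP$), applying the right-hand side to $\Rmat$ yields $-\PP(\QQ(\PP^{-1}\Rmat))$, with parentheses retained because associativity can fail on $\Kfour$.

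To rewrite this as a matrix times $\Rmat$, I would apply the polarization identity~\eqref{CliffordID2} with $\Rmat$ replaced by $\PP^{-1}\Rmat$, obtaining
\[
\PP(\QQ(\PP^{-1}\Rmat)) = 2g(\PP,\QQ)\PP^{-1}\Rmat - \QQ\bigl(\PP(\PP^{-1}\Rmat)\bigr).
\]
Because $\PP^{-1}$ is a real scalar multiple of $\PP$ and $\PP^2=|\PP|^2\,\II$, Lemma~\ref{Paltlemma} immediately gives $\PP(\PP^{-1}\Rmat)=\Rmat$, so the left side collapses to $2g(\PP,\QQ)\PP^{-1}\Rmat - \QQ\Rmat$. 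For the other side of the claim, I work at the $4\times4$ matrix level: the Clifford identity~\eqref{CliffordID} yields $\PP\QQ\PP^{-1} = 2g(\PP,\QQ)\PP^{-1} - (\QQ\PP)\PP^{-1}$, and the argument of the preceding lemma applies equally to the triple $(\QQ,\PP,\PP^{-1})$ (only two independent imaginary directions in each of $\KK,\KK'$ are involved) to give $(\QQ\PP)\PP^{-1} = \QQ(\PP\PP^{-1}) = \QQ$. Hence $(\PP\QQ\PP^{-1})\Rmat = 2g(\PP,\QQ)\PP^{-1}\Rmat - \QQ\Rmat$, matching the previous calculation and forcing $R_{\PP_L}(\QQ_L)(\Rmat) = -(\PP\QQ\PP^{-1})\Rmat$ for every $\Rmat$, as desired.

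The main obstacle is the associativity gap between the two sides: $\PP_L\QQ_L\PP_L^{-1}$ acts on $\Rmat$ by iterated left multiplication on a matrix with entries spanning all of $\KKprime$, whereas $(\PP\QQ\PP^{-1})_L$ first forms the $4\times4$ matrix product $\PP\QQ\PP^{-1}$ and only then multiplies by $\Rmat$. A priori these need not agree, because matrix multiplication in $\Kfour$ is genuinely nonassociative. Lemma~\ref{Paltlemma} and the polarization identity~\eqref{CliffordID2} are precisely the tools that bridge this gap: each collapses an apparently nonassociative triple involving $\PP$ into an expression governed only by the quadratic form $g$, which is by construction insensitive to how the product is bracketed.
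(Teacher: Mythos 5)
Your proof is correct, and it takes a genuinely different route from the paper's. The paper reduces the claim to the single identity $\PP(\QQ(\PP\Rmat)) = (\PP\QQ\PP)\Rmat$ (equation~(\ref{Moufang})) and then cites the Moufang identity $p(q(p(r)))=(pqp)r$ together with antisymmetry of the associator in $\KK$ and $\KK'$. You avoid Moufang entirely: you expand both $-\PP(\QQ(\PP^{-1}\Rmat))$ and $-(\PP\QQ\PP^{-1})\Rmat$ using the matrix-level Clifford relation~(\ref{CliffordID}) and its polarized operator form~(\ref{CliffordID2}), together with Lemma~\ref{Paltlemma} and the restricted-associativity observation from the lemma immediately preceding the statement, and show that both sides reduce to $-\bigl(2g(\PP,\QQ)\,\PP^{-1}\Rmat - \QQ\Rmat\bigr)$. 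As for the tradeoff: the paper's argument is shorter and more conceptual, but it leans on lifting the scalar Moufang identity from $\KK$ and $\KK'$ to $4\times4$ matrices over $\KKprime$, a step it leaves implicit. Your argument is more computational but entirely self-contained given the Clifford machinery already developed in Section~\ref{Clifford}, and it makes explicit exactly where nonassociativity could bite --- the re-bracketings $(\QQ\PP)\PP^{-1}=\QQ(\PP\PP^{-1})$ and $\PP(\PP^{-1}\Rmat)=\Rmat$ --- and how each is dispatched (two-generator associativity in $\KKprime$, and alternativity via Lemma~\ref{Paltlemma}, respectively). Both arguments are sound.
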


\begin{proof}
Given that $\PP^2$ is a multiple of the identity, it is enough to show that
\begin{equation}
\PP_L\circ\QQ_L\circ\PP_L = (\PP\QQ\PP)_L
\end{equation}
in $\Kfour$, that is, that
\begin{equation}
\PP(\QQ(\PP(\Rmat))) = (\PP\QQ\PP)(\Rmat)
\label{Moufang}
\end{equation}
for $\Rmat\in\Gamma(\Vtwo)$.  But~(\ref{Moufang}) follows immediately from the
Moufang identity
\begin{equation}
p(q(p(r))) = (pqp)r
\end{equation}
and the antisymmetry of the associator, which hold in both $\KK$ and $\KK'$.
\end{proof}

Lemma~\ref{Rlemma} is the key computation, as it immediately gives us a
representation of $\Pin(\Vfour)$ using matrices over division algebras, and
allows us to finally identify $\Vfour$ with $\Gamma(\Vtwo)$.  We continue to
write $\PP\QQ$ for the matrix product in $\Gamma(\Vtwo)$, and introduce the
notation $\PP\cdot\QQ$ for the Clifford product in $\Vfour$, that is, as
shorthand for $\PP_L\circ\QQ_L$.

\begin{lemma}
There is a homomorphism
\begin{equation}
R \maps \Pin(\Vfour) \to \O(\Vfour)
\label{Omap}
\end{equation}
which sends unit vectors $\PP \in \Vfour$ to the element $R_\PP$ of
$\O(\Vfour)$ given by:
\begin{equation}
R_\PP(\QQ) = -\PP\QQ\PP^{-1}, \quad \QQ \in \Vfour
\end{equation}
and sends a general element $g = \PP_1 \cdot \PP_2 \cdot \cdots \cdot \PP_n$
in $\Pin(\Vfour)$ to the element of $\O(\Vfour)$ given by:
\begin{equation}
R_g(\QQ) = (-1)^n \PP_1(\PP_2( \cdots (\PP_n \QQ \PP^{-1}_n )
		\cdots) \PP_2^{-1} ) \PP_1^{-1} .
\end{equation}
\label{L2}
\end{lemma}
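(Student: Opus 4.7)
The plan is to derive the lemma as the concrete realization, in our matrix representation, of the standard twisted adjoint homomorphism $R \maps \Pin(V) \to \O(V)$ from Clifford theory, specialized to $V = \Vfour$. Since Theorem~\ref{thm1} identifies the subalgebra of $\End(\Kfour)$ generated by $\Gamma_L(\Vtwo)$ with the associative Clifford algebra $\Cl(\kap)$, the abstract pin-group construction immediately produces a homomorphism as in~(\ref{Omap}) sending a unit vector $\PP \in \Vfour$ to the reflection $R_\PP$ along $\PP$, and a product $g = \PP_1 \cdot \PP_2 \cdot \cdots \cdot \PP_n$ of unit vectors to the composition $R_g = R_{\PP_1} \circ R_{\PP_2} \circ \cdots \circ R_{\PP_n}$. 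What remains is to verify that this composition, when applied to a vector $\QQ \in \Vfour$, can be written explicitly as the nested matrix product in the statement, despite the nonassociativity of matrix multiplication in $\Gamma(\Vtwo)$.

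The single-generator case is settled by Lemma~\ref{Rlemma}, which gives $R_\PP(\QQ) = -(\PP\QQ\PP^{-1})_L$, with the inner triple product unambiguous thanks to the preceding lemma on $\PP\QQ\PP^{-1}$. From there I would proceed by induction on $n$. The key observation at each step is that the matrix produced so far still lies in $\Gamma(\Vtwo)$, precisely because the preceding lemma guarantees that conjugating a vector by a unit vector keeps us there. Hence the next reflection $R_{\PP_i}$ can again be evaluated via Lemma~\ref{Rlemma}, wrapping one more layer $\PP_i(\cdots)\PP_i^{-1}$ around the current expression and contributing another factor of $-1$. After $n$ steps this accumulates the sign $(-1)^n$ and yields exactly the nested formula stated in the lemma.

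The main obstacle is bookkeeping: making sure that the specific parenthesization in the stated formula corresponds to function composition in the associative algebra $\End(\Kfour)$. This is not deep once one notices that the composition $R_{\PP_1} \circ \cdots \circ R_{\PP_n}$ evaluated at $\QQ$ is computed innermost-first, matching the inside-out nesting in the lemma. What I must scrupulously avoid is ever re-associating a product such as $\PP_1(\PP_2\,\QQ)$ into $(\PP_1\PP_2)\QQ$, which would drop us back into the nonassociative matrix algebra and destroy the argument; each reduction step must instead be performed at the level of $\End(\Kfour)$, reinvoking Lemma~\ref{Rlemma} afresh at every stage to peel off one outer conjugation at a time.
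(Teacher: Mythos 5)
Your proposal is correct and takes essentially the same route as the paper: invoke the standard twisted-adjoint homomorphism $R\colon\Pin(V)\to\O(V)$ from Clifford theory (which applies here because Theorem~\ref{thm1} establishes that $\Gamma_L(\Vtwo)$ generates an honest associative Clifford algebra inside $\End(\Kfour)$), then translate each reflection into nested matrix conjugation via Lemma~\ref{Rlemma}. The paper compresses this to a one-line ``follows immediately,'' whereas you have usefully made explicit the induction on $n$ and the point that the intermediate result stays in $\Gamma(\Vtwo)$ at every stage (via the lemma preceding Lemma~\ref{Rlemma}) so that Lemma~\ref{Rlemma} can be reapplied; this added care is sound and fills in exactly what the paper leaves to the reader.
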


\begin{proof}
This result follows immediately from the definition of the homomorphism $R$
and the fact that we can use Lemma~\ref{Rlemma} to express $R$ using matrix
multiplication.
\end{proof}

Restricting Lemma~\ref{L2} to the spin group, we get the usual representation
of $\Spin(\Vfour)$ on $\Vfour$, expressed using matrices over division
algebras.

\begin{lemma}
There is a homomorphism
\begin{equation}
R \maps \Spin(\Vfour) \to \SO(\Vfour)
\label{Smap}
\end{equation}
which sends a product of unit vectors $g = \PP_1\cdot\PP_2$ in $\Spin(\Vfour)$
to the element $R_g$ of $\SO(\Vfour)$ given by:
\begin{equation}
R_g(\QQ) = \PP_1(\PP_2\QQ\PP_2^{-1})\PP_1^{-1} .
\label{double}
\end{equation}
\label{L3}
\end{lemma}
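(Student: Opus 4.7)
The plan is to obtain this lemma as a direct restriction of Lemma~\ref{L2}. The group $\Spin(\Vfour)$ is defined as the subgroup of $\Pin(\Vfour)$ generated by products of pairs of unit vectors, so every element of $\Spin(\Vfour)$ is, up to a finite product, a product of an even number of unit vectors. Lemma~\ref{L2} already furnishes the homomorphism $R \maps \Pin(\Vfour)\to\O(\Vfour)$; all that remains is to check that this restriction (i) lands in $\SO(\Vfour)$, and (ii) takes the advertised form when evaluated on an element of the form $g = \PP_1\cdot\PP_2$.

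For (i), I would invoke the standard fact (already used implicitly in the construction of $R$) that each generator $R_\PP$ is a reflection, hence has determinant $-1$ in $\O(\Vfour)$, so any product of an even number of such reflections has determinant $+1$ and therefore lies in $\SO(\Vfour)$. Since $R$ is a group homomorphism on $\Pin(\Vfour)$, its image on $\Spin(\Vfour)$ lies in $\SO(\Vfour)$.

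For (ii), I would simply substitute $n=2$ into the general formula in Lemma~\ref{L2}, which gives
\begin{equation}
R_g(\QQ) \;=\; (-1)^2\,\PP_1\bigl(\PP_2\,\QQ\,\PP_2^{-1}\bigr)\,\PP_1^{-1}
\;=\; \PP_1\bigl(\PP_2\,\QQ\,\PP_2^{-1}\bigr)\,\PP_1^{-1},
\end{equation}
which is exactly~(\ref{double}). The two minus signs coming from the two reflections cancel, which is the conceptual reason $\Spin$ is cleaner than $\Pin$ here.

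The only subtle point worth commenting on is associativity: the expression $\PP_1(\PP_2\QQ\PP_2^{-1})\PP_1^{-1}$ involves matrix products over $\KKprime$, which is generally nonassociative. However, by the preceding lemma, $\PP_2\QQ\PP_2^{-1}\in\Gamma(\Vtwo)$ is unambiguously defined, and the outer conjugation by $\PP_1$ is likewise unambiguous by the same lemma applied to the pair $(\PP_1,\PP_2\QQ\PP_2^{-1})$. Thus the formula in~(\ref{double}) makes sense as written, and no further associativity bracketing is needed. I expect no essential obstacle: the entire lemma is bookkeeping once Lemma~\ref{L2} and the bracketing lemma are in hand.
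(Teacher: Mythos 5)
Your proposal matches the paper's own proof, which simply observes that the homomorphism~(\ref{Smap}) is the restriction of~(\ref{Omap}) to the spin group. Your elaborations (the determinant count showing the image lands in $\SO(\Vfour)$, substituting $n=2$ into the formula from Lemma~\ref{L2}, and the remark that the bracketing is already well-defined by the preceding lemma) are all correct and just make explicit what the paper leaves implicit.
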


\begin{proof}
The homomorphism~(\ref{Smap}) is just the restriction of~(\ref{Omap}) to the
spin group.
\end{proof}

We have proved:

\begin{theorem}
The second-order homogeneous elements of $\Cl(\kap)$ generate an action of
$\SO(\kap)$ on $\Vfour=\Gamma(\Vtwo)$.
\label{T1}
\end{theorem}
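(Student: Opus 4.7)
The plan is to assemble this as essentially a summary corollary of the machinery already developed. The second-order homogeneous elements of $\Cl(\kap)$ are by definition products of pairs of vectors from $\Vfour = \Gamma_L(\Vtwo)$, so the multiplicative subgroup they generate (restricted to products of pairs of unit vectors) is by definition the spin group $\Spin(\Vfour)$. Via Theorem~\ref{thm1}, this coincides with $\Spin(\kap)$.

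Next, I would invoke Lemma~\ref{L3}, which already provides the needed homomorphism $R \maps \Spin(\Vfour) \to \SO(\Vfour)$ expressed concretely through nested matrix conjugations in $\Gamma(\Vtwo)$. The one remaining identification is that $\SO(\Vfour) = \SO(\kap)$. This follows because the quadratic form $g$ on $\Vfour$ is obtained by polarizing $g(\Gamma(\XX),\Gamma(\XX)) = -\det(\XX)$, and equation~(\ref{det}) shows
\begin{equation}
-\det(\XX) = |A|^2 + |a|^2,
\end{equation}
so $g$ has exactly the signature $(\kap)$ inherited from the orthogonal direct sum $\KK' \oplus \KK$, with $\kappa+\kappa'_+$ positive directions and $\kappa'_-$ negative directions.

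For surjectivity onto $\SO(\kap)$, one appeals to the standard Cartan--Dieudonn\'e fact that every element of $\SO(V)$ is a product of an even number of reflections $R_w$, each of which is realized by a unit vector $w\in\Vfour$ via Lemma~\ref{Rlemma}. The composition of two such reflections is precisely the image under $R$ of a product of two unit vectors in the Clifford algebra, and the set of such products generates $\Spin(\kap)$.

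I do not expect a genuine obstacle here: the bulk of the work — associativity of the composition representation, the Moufang-based conjugation identity of Lemma~\ref{Rlemma}, and the Clifford identification of Theorem~\ref{thm1} — has already been done. The only delicate point worth stating explicitly is matching signatures, i.e., verifying that the quadratic form inherited by $\Vfour$ from the determinant really is the signature-$(\kap)$ form of $\KK'\oplus\KK$ and not some rescaling or sign-flipped cousin; this is dispatched by the direct computation in~(\ref{det}).
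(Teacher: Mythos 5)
Your proof is correct and follows essentially the same route as the paper, which presents Theorem~\ref{T1} as an immediate corollary of Lemmas~\ref{Rlemma}--\ref{L3} with no further argument (the paper literally writes "We have proved:"). You have merely spelled out what the paper leaves implicit: that the quadratic form $-\det$ on $\Vfour$ has signature $(\kap)$ so that $\SO(\Vfour)=\SO(\kap)$, and that surjectivity comes from Cartan--Dieudonn\'e together with the realization of reflections as conjugation in Lemma~\ref{Rlemma}. One small terminological note: "second-order homogeneous elements" of $\Cl(\kap)$ means the grade-$2$ part (bivectors $\GG_\inda\GG_\indb$), which \emph{exponentiate} to spin group elements, whereas products of two unit vectors $\PP_1\cdot\PP_2$ are in general a mix of grade $0$ and grade $2$; your phrasing conflates the two, but since the bivectors are exactly the Lie algebra of $\Spin(\kap)$ and products of unit vector pairs generate it as a group, the conflation is harmless and is precisely the transition the paper itself makes going from Lemma~\ref{L3} to the explicit exponentials in Section~\ref{explicit}.
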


\section{\boldmath An Explicit Construction of $\SO(\kap)$}
\label{explicit}

We now implement the construction in the previous section, obtaining an
explicit construction of the generators of $\SO(\kap)$ in a preferred basis.

We can expand elements $\XX\in\Vtwo$ in terms of a basis as
\begin{equation}
\XX = x^\inda\SS_\inda
\label{Xdef}
\end{equation}
where we have set
\begin{equation}
x^\inda = 
\begin{cases}
a_\inda & (1\le \inda\le8)\\
A_{\inda-8} & (9\le \inda\le16)
\end{cases}
\end{equation}
and where there is an implicit sum over the index $\inda$, which takes on
values between~$1$ and~$16$ as appropriate for the case being considered.
Equation~(\ref{Xdef}) defines the \textit{generalized Pauli matrices}
$\SS_\inda$, which are given this name because $\SS_1$, $\SS_2$, and $\SS_9$
are just the usual Pauli spin matrices.  We can further write
\begin{equation}
\PP
  = \Gamma(\XX)
  = x^\inda\GG_\inda ,
\end{equation}
which implicitly defines the gamma matrices $\GG_\inda=\Gamma(\SS_p)$.  (The
only $\SS_\inda$ which are affected by trace reversal are those containing an
imaginary element of~$\KK'$, which are imaginary multiples of the identity
matrix.)
Direct computation shows that
\begin{equation}
\{\GG_\inda,\GG_\indb\} = 2g_{\inda\indb}\II
\label{CliffordID5}
\end{equation}
where
\begin{equation}
g_{\inda\indb}
  = \begin{cases}
	0&\inda\neq\indb\\
	1&1\le\inda=\indb\le12\}\\
	-1&13\le\inda=\indb\le16\}
    \end{cases}
\end{equation}
and we have recovered~(\ref{CliffordID}).

The elements of $\Vfour$ are the homogeneous linear elements of $\Cl(\kap)$.
In the associative case, the homogeneous quadratic elements of $\Cl(\kap)$
would act on $\Vfour$ as generators of $\SO(\kap)$ via the map
\begin{equation}
\label{paction}
\PP \longmapsto \MM_{\inda\indb}\PP\MM_{\inda\indb}^{-1}
\end{equation}
where
\begin{equation}
\MM_{\inda\indb}
  = \exp\left(-\GG_\inda\GG_\indb\>\frac{\theta}{2}\right)
\label{mdef}
\end{equation}
and with $\PP = x^\inda\GG_\inda$ as above.  We introduce the notation $e_a$
for the octonionic and split octonionic units, defined so that
\begin{equation}
a+A = x^a e_a
\end{equation}
in analogy with~(\ref{Xdef}), and we consider first the case where
$[e_\inda,e_\indb,e_\indc]=0$, with $\inda,\indb,\indc$ assumed to be
distinct.  Then the Clifford identity~(\ref{CliffordID5}) implies that
\begin{align}
\GG_\inda\GG_\inda &= \pm \II,
  \label{prop1}\\
(\GG_\inda\GG_\indb)\GG_\indc
  &= \GG_\indc(\GG_\inda\GG_\indb),
  \label{prop2}\\
(\GG_\inda\GG_\indb)\GG_\indb
  &= (\GG_\indb)^2\GG_\inda
   = g_{\indb\indb}\GG_\inda,
  \label{prop3}\\
(\GG_\inda\GG_\indb)\GG_\inda
  &= -(\GG_\inda)^2\GG_\indb
   = -g_{\inda\inda}\GG_\indb,
  \label{prop4}\\
(\GG_\inda\GG_\indb)^2
  &= -\GG_\inda^2\GG_\indb^2
   = \pm \II
  \label{prop5},
\end{align}
With these observations, we compute
\begin{equation}
\label{action4}
\MM_{\inda\indb}\PP\MM_{\inda\indb}^{-1}
  = \exp\left(-\GG_\inda\GG_\indb\>\frac{\theta}{2}\right)
	\left(x^\indc\GG_\indc\right)
	\exp\left(\GG_\inda\GG_\indb\>\frac{\theta}{2}\right).
\end{equation}
From~(\ref{prop1}), if
$\inda=\indb$, then $\MM_{\inda\indb}$ is a real multiple of the identity
matrix, which therefore leaves $\PP$ unchanged under the
action~(\ref{paction}).  On the other hand, if $\inda\neq\indb$,
properties~(\ref{prop2})--(\ref{prop4}) imply that $\MM_{\inda\indb}$
commutes with all but two of the matrices $\GG_\indc$.  We therefore
have
\begin{equation}
\GG_\indc\MM_{\inda\indb}^{-1}
  = \begin{cases}
	\MM_{\inda\indb}\GG_\indc,
		&\indc=\inda\text{ or }\indc=\indb\\
	\MM^{-1}_{\inda\indb}\GG_\indc,
		&\inda\neq\indc\neq\indb
    \end{cases}
\label{maction}
\end{equation}
so that the action of $\MM_{\inda\indb}$ on $\PP$ affects only the
$\inda\indb$ subspace.  To see what that action is, we first note that if
$\AAA^2 = \pm\II$ then
\begin{equation}
\label{euler}
\exp\left(\AAA\alpha\right)
  = \II\,c(\alpha) + \AAA\,s(\alpha)
  = \begin{cases}
	\II\,\cosh(\alpha) + \AAA\,\sinh(\alpha),&\AAA^2 = \II\\
	\II\,\cos(\alpha) + \AAA\,\sin(\alpha),&\AAA^2 = -\II
    \end{cases}
\end{equation}
where the second equality can be regarded as defining the functions~$c$
and~$s$.  Inserting~(\ref{maction}) and~(\ref{euler}) into~(\ref{action4}), we
obtain
\begin{align}
\label{action}
\MM_{\inda\indb} \left(
	x^\inda\GG_\inda + x^\indb\GG_\indb
	\right) \MM_{\inda\indb}^{-1}
  &= \left(\MM_{\inda\indb}\right)^2
	\left(x^\inda\GG_\inda + x^\indb\GG_\indb\right)
  \nonumber\\
  &= \exp\left(-\GG_\inda\GG_\indb\theta\right)
	\left(x^\inda\GG_\inda + x^\indb\GG_\indb\right)
  \nonumber\\
  &= \bigl(
	\II\,c(\theta) - \GG_\inda\GG_\indb\,s(\theta)
     \bigr)
	\left(x^\inda\GG_\inda + x^\indb\GG_\indb\right)
  \nonumber\\
  &= \left(x^\inda c(\theta) - x^\indb s(\theta)g_{\indb\indb}\right)
	\GG_\inda
	+ \left(x^\indb c(\theta) + x^\inda s(\theta)g_{\inda\inda}\right)
	  \GG_\indb.
\end{align}
Thus, the action~(\ref{paction}) is either a rotation or a boost in the
$\inda\indb$-plane, depending on whether
\begin{equation}
(\GG_\inda\GG_\indb)^2 = \pm\II
\end{equation}
More precisely, if $\inda$ is spacelike ($g_{\inda\inda}=1$),
then~(\ref{paction}) corresponds to a rotation by~$\theta$ from~$\inda$
to~$\indb$ if $\indb$ is also spacelike, or to a boost in the $\inda$
direction if $\indb$ is timelike ($g_{\indb\indb}=-1$), whereas if $\inda$ is
timelike, the rotation (if $\indb$ is also timelike) goes from $\indb$
to~$\inda$, and the boost (if $\indb$ is spacelike) is in the negative $\inda$
direction.

If $\KKprime=\HH'\otimes\HH$ (or any of its subalgebras), we're done:
since transformations of the form~(\ref{paction}) preserve the determinant of
$\PP$, it is clear from~(\ref{det}) that we have constructed $\SO(6,2)$
(or one of its subgroups).

What about the nonassociative case?
We can no longer use~(\ref{prop2}), which now contains an extra minus sign.  A
different strategy is needed.

If $e_\inda$, $e_\indb$ commute, then they also associate with every basis
unit, that is
\begin{equation}
[e_\inda, e_\indb] = 0 \Longrightarrow [e_\inda, e_\indb,e_\indc]=0
\label{commass}
\end{equation}
and the argument above leads to~(\ref{action}) as before.  We therefore assume
that $e_\inda$, $e_\indb$ anticommute, the only other possibility; in this
case, $e_\inda$, $e_\indb$ are imaginary basis units that either both lie in
in $\OO$, or in $\OO'$.
As before, we seek a transformation that acts only on the $\inda\indb$
subspace.  But in this case, we have:

\begin{lemma}
$\Gamma_\inda\Gamma_\indb\Gamma_\inda^{-1} = e_\inda\Gamma_\indb e_\inda^{-1}$
for $\inda\ne\indb\in\{2,...,8\}$ or $\inda\ne\indb\in\{10,...,16\}$.
\label{LPe}
\end{lemma}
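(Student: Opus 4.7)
The plan is to exploit a structural factorization of the gamma matrices in these two index ranges. First, I would verify directly from the definitions of $\SS_\inda$ and $\Gamma(\SS_\inda)$ that in both ranges the gamma matrix factors as
\[
\Gamma_\inda = e_\inda M \qquad (\inda \in \{2,\dots,8\}\cup\{10,\dots,16\}),
\]
where $M$ is a \emph{real} $4\times 4$ matrix satisfying $M^2 = -\II$; a single $M$ serves all $\inda\in\{2,\dots,8\}$ (since each such $\SS_\inda$ is $e_\inda$ times the same off-diagonal real $2\times 2$ matrix, and is traceless so $\widetilde{\SS}_\inda = \SS_\inda$), and a different (still real) $M$ serves all $\inda\in\{10,\dots,16\}$ (since there $\SS_\inda = e_\inda\II_2$, whose trace-reversal equals $-\SS_\inda$). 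This isolates all of the ``nonassociative content'' of $\Gamma_\inda$ in the single scalar $e_\inda$, which lies entirely in one of the two tensor factors of $\KKprime$.

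The key tools are then (i) real scalars are central in $\KKprime$ and associate and commute freely with every element; and (ii) when two distinct imaginary basis units $e_\inda,e_\indb$ lie in the same composition algebra ($\OO$ or $\OO'$), they anticommute and, by Artin's theorem, generate an associative subalgebra, so any triple product involving only these two units is unambiguous. With this, I would compute both sides entry by entry. For the left-hand side, each summand of the triple matrix product has octonionic parts $(e_\inda,e_\indb,e_\inda)$, which lie in this associative subalgebra, so the real matrix factors may be pulled out freely. Using $\Gamma_\inda^{-1}=g_{\inda\inda}\Gamma_\inda$ and collecting the sums, the $(i,j)$ entry of $\Gamma_\inda\Gamma_\indb\Gamma_\inda^{-1}$ reduces to $g_{\inda\inda}\,(e_\inda e_\indb e_\inda)\,(M^3)_{ij}$. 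The alternativity identity $(e_\inda e_\indb)e_\inda = -e_\indb e_\inda^2 = g_{\inda\inda}e_\indb$ (using anticommutation and $e_\inda^2 = -g_{\inda\inda}$) together with $M^3 = -M$ collapses this to $-e_\indb M_{ij} = -(\Gamma_\indb)_{ij}$.

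The right-hand side is an immediate variant: $e_\inda\Gamma_\indb e_\inda^{-1}$ has $(i,j)$ entry $M_{ij}\,(e_\inda e_\indb e_\inda^{-1})$, and since $e_\inda^{-1} = -g_{\inda\inda}e_\inda$, the same alternativity identity yields $e_\inda e_\indb e_\inda^{-1} = -e_\indb$. Both sides therefore equal $-\Gamma_\indb$. The main obstacle is the bookkeeping: because $\KKprime$ is not globally alternative, one must verify at each step that every triple product actually invoked involves at most two distinct imaginary units from a single alternative factor, so that Artin's theorem applies. The factorization $\Gamma_\inda = e_\inda M$ with $M$ real is precisely what makes this possible, concentrating the nonassociativity in a single octonionic scalar.
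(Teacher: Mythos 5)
Your proof is correct and is essentially the argument the paper compresses into its one-line hint (``Use the Clifford identity and the fact that $\Gamma_\inda^2 = 1$''): both routes reduce each side to $-\Gamma_\indb$ via an anticommutation---the Clifford relation $\Gamma_\inda\Gamma_\indb = -\Gamma_\indb\Gamma_\inda$ on the left, and the scalar relation $e_\inda e_\indb = -e_\indb e_\inda$ for the conjugation on the right. What your write-up adds is the structural factorization $\Gamma_\inda = e_\inda M$ with $M$ a fixed real matrix satisfying $M^2=-\II$, which derives both anticommutations from the single scalar identity and, more importantly, makes the nonassociativity bookkeeping explicit: concentrating all of the imaginary content of each $\Gamma_\inda$ in one scalar is exactly what lets Artin's theorem certify that every triple product you form is unambiguous, a point the paper leaves implicit (relying on its earlier Lemma about $(\PP\QQ)\PP^{-1}$). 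You also track the sign $g_{\inda\inda}=\pm1$ correctly throughout; the paper's hint states $\Gamma_\inda^2 = 1$, which is literally true only for $\inda \le 12$ and should read $\Gamma_\inda^2 = g_{\inda\inda}\II$ to cover $\inda \in \{13,\dots,16\}$, though this does not affect the argument.
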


\begin{proof}
Use the Clifford identity and the fact that $\Gamma_\inda^2=1$.
\end{proof}

Consider therefore the transformation
\begin{equation}
\label{flip}
\PP \longmapsto e_\inda\PP e_\inda^{-1}
\end{equation}
which preserves directions corresponding to units $e_\indb$ that commute with
$e_\inda$, and reverses the rest, which anticommute with $e_\inda$.  We call
this transformation a \textit{flip} about $e_\inda$; any imaginary unit can be
used, not just basis units.  If we compose flips about any two units in the
$\inda\indb$ plane, then all directions orthogonal to this plane are either
completely unchanged, or flipped twice, and hence invariant under the combined
transformation.  Such double flips therefore affect only the $\inda\indb$
plane.
\footnote{We use flips rather than reflections because flips are themselves
rotations, whereas reflections are not.}

The rest is easy.  We \textit{nest} two flips, replacing~(\ref{paction}) by
\begin{equation}
\PP \longmapsto
	\MM_2\left(\MM_1\PP\MM_1^{-1}\right)\MM_2^{-1}
\label{nest}
\end{equation}
where
\begin{align}
\MM_1 &= -e_\inda\,\II \nonumber\\
\MM_2
  &= \left(e_\inda\,c(\halfang)+e_\indb\,s(\halfang)\right)\,\II
  \nonumber\\
  &= \begin{cases}
	\left(e_\inda\cosh(\halfang) + e_\indb\,\sinh(\halfang)\right)\II,
		&(e_\inda e_\indb)^2 = 1 \\
	\left(e_\inda\cos(\halfang)+e_\indb\,\sin(\halfang)\right)\II,
		&(e_\inda e_\indb)^2 = -1
     \end{cases}
\label{M12}
\end{align}
Using the relationships
\begin{align}
\bigl(e_\inda c(\alpha)+e_\indb s(\alpha)\bigr)^2 
   = e_\inda^2 c^2(\alpha)+e_\indb^2 s^2(\alpha)
  &= e_\inda^2 = -g_{\inda\inda}\\
e_\inda^2 c^2(\alpha)-e_\indb^2 s^2(\alpha)
  &= -g_{\inda\inda} c(2\alpha) \\
2s(\alpha)c(\alpha) &= s(2\alpha)
\end{align}
we now compute
\begin{align}
&\MM_2\left( \MM_1 \left(
	x^\inda\GG_\inda + x^\indb\GG_\indb
	\right) \MM_1^{-1} \right) \MM_2^{-1}
   = \MM_2\left( 
	x^\inda\GG_\inda - x^\indb\GG_\indb
	\right) \MM_2^{-1} \nonumber\\
  &\qquad\qquad
   = \left(e_\inda\,c(\halfang)+e_\indb\,s(\halfang)\right) \left( 
	x^\inda\GG_\inda - x^\indb\GG_\indb
	\right) \left(e_\inda\,c(\halfang)+e_\indb\,s(\halfang)\right)
	(-g_{\inda\inda})
	\nonumber\\
  &\qquad\qquad
   = \left(x^\inda c(\theta) - x^\indb s(\theta)\,
	g_{\inda\inda}\,g_{\indb\indb}\right) \GG_\inda
	+ \left(x^\indb c(\theta) + x^\inda s(\theta)\right) \GG_\indb.
\label{nonass}
\end{align}
and we have constructed the desired rotation in the $\inda\indb$ plane.

We also have
\begin{equation}
\GG_\inda\GG_\indb = -e_\inda e_\indb\,\II
\qquad\qquad ([e_\inda,e_\indb]\ne0)
\end{equation}
so in the associative case (with $e_\inda$, $e_\indb$ anticommuting), we have
\begin{equation}
\MM_2\MM_1
  = \left( g_{\inda\inda} c(\halfang) + e_\inda e_\indb s(\halfang) \right)
	\II
  = g_{\inda\inda} \exp\left( -g_{\inda\inda}
	\GG_\inda\GG_\indb\>\frac{\theta}{2}\right)
\end{equation}
which differs from $M_{\inda\indb}$ only in replacing $\theta$ by $-\theta$
(and an irrelevant overall sign) if $g_{\inda\inda}=-1$.  In other words, the
nested action~(\ref{nest}) does indeed reduce to the standard
action~(\ref{paction}) in the associative case, up to the orientations of the
transformations.  In this sense,~(\ref{nest}) is the nonassociative
generalization of the process of exponentiating homogeneous elements of the
Clifford algebra in order to obtain rotations in the orthogonal group.

We therefore use~(\ref{paction}) if $e_\inda$ and $e_\indb$ commute,
and~(\ref{nest}) if they don't.  Since both of these transformations preserve
the determinant of $\PP$, it is clear from~(\ref{det}) that we have
constructed $\SO(\kap)$ from $\Cl(\kap)$.

\begin{theorem}
The nested flips~(\ref{nest})--(\ref{M12}) generate an action of
\hbox{$SO(\kap)$} on $\Vfour$.
\label{T3}
\end{theorem}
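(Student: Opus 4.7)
The plan is to deduce the theorem from three ingredients: that the nested-flip construction~(\ref{nest})--(\ref{M12}) together with the associative rotations~(\ref{paction}) produce orthogonal transformations of $\Vfour$; that each such transformation is orientation-preserving; and that they jointly generate all of $\SO(\kap)$.

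For any pair $\inda \ne \indb$ in the range of basis-unit indices, one of two situations applies. If $[e_\inda, e_\indb]=0$, then by~(\ref{commass}) the associator of $e_\inda$, $e_\indb$ with any third basis unit vanishes, so the associative argument~(\ref{action}) together with~(\ref{maction}) shows that~(\ref{paction}) rotates or boosts in the $\inda\indb$-plane while fixing the orthogonal complement. If instead $\{e_\inda, e_\indb\}=0$, both units lie in a single copy of $\Im\OO$ or of $\Im\OO'$, and the explicit calculation~(\ref{nonass}) shows that the nested flip~(\ref{nest}) rotates by $\theta$ in the $\inda\indb$-plane. In either case the transformation preserves $-\det(\PP)$, hence the norm on $\Vtwo$ by~(\ref{det}), so it lies in $\O(\kap)$.

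It remains to verify in the nonassociative case that the nested flip fixes every $\GG_\indc$ orthogonal to the $\inda\indb$-plane. A single flip $\QQ \mapsto e_\inda \QQ e_\inda^{-1}$ fixes or negates $\GG_\indc$ according as $e_\indc$ commutes or anticommutes with $e_\inda$. Because $\{e_\inda,e_\indb\}$ generates an associative subalgebra by Artin's theorem, conjugation by $\MM_2=(e_\inda c(\halfang)+e_\indb s(\halfang))\II$ respects the same commutation pattern as conjugation by $e_\inda$ and by $e_\indb$ separately, and a short case check using Tables~\ref{omult} and~\ref{smult} shows that every $\GG_\indc$ with $\indc \notin \{\inda,\indb\}$ is either fixed by both flips or negated by both, hence fixed by the composite. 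Since each transformation depends continuously on $\theta$ and reduces to the identity at $\theta=0$, it lies in the identity component $\SO(\kap)$; and because we obtain a one-parameter subgroup for each coordinate 2-plane, the resulting collection spans the Lie algebra of $\SO(\kap)$ and thus generates the whole group.

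The main obstacle is the orthogonality check in the nonassociative case: one must rule out that the nested conjugation introduces spurious cross-terms mixing $\GG_\indc$ with $\GG_{\indc'}$ for $\indc,\indc'\notin\{\inda,\indb\}$, which cannot arise in the associative analysis but a priori could in the nonassociative one. Alternativity of the composition algebras, together with Artin's theorem applied to the associative subalgebra generated by $\{e_\inda, e_\indb\}$, is precisely what eliminates these cross-terms and reduces the verification to a finite inspection of the multiplication tables.
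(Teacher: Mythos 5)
Your proposal is correct and follows essentially the same route as the paper's own argument: split into the commuting case (handled by the associative computation~(\ref{action})) and the anticommuting case (handled by the explicit nested-flip computation~(\ref{nonass})), then observe that having rotations in every coordinate 2-plane suffices to generate all of $\SO(\kap)$. The extra step you include---checking that the nested flip fixes every $\GG_\indc$ with $\indc\notin\{\inda,\indb\}$---is the same observation the paper makes in the discussion immediately preceding the theorem, where it notes that a double flip leaves directions orthogonal to the $\inda\indb$ plane either untouched or flipped twice. One small quibble: the invocation of Artin's theorem here is not quite the relevant fact. What you actually need is that $e_\indc$ either commutes with both $e_\inda$ and $e_\indb$ or anticommutes with both (a structural fact about $\OO$, $\OO'$, and their tensor product), from which commutation or anticommutation with the linear combination $e_\inda\,c(\halfang)+e_\indb\,s(\halfang)$ follows immediately; associativity of the subalgebra generated by $e_\inda,e_\indb$ plays no role in that step. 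That imprecision aside, the argument is sound and matches the paper's.
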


\begin{proof}
If $[e_\inda,e_\indb]=0$, Lemma~\ref{LPe} and~(\ref{commass}) imply
that~(\ref{nest}) reduces to~(\ref{paction}) (up to an irrelevant sign), and
this action was shown in~(\ref{action}) to be a rotation in the $\inda\indb$
plane.
If $[e_\inda,e_\indb]\ne0$, then~(\ref{nonass}) shows that this action is
again a rotation in the $\inda\indb$ plane.
Since we have constructed rotations in all coordinate planes, we can combine
them using generalized Euler angles to produce any desired group element.
\end{proof}

Equivalently, Lemma~\ref{LPe} and the reduction of~(\ref{nest})
to~(\ref{paction}) in the associative case, together with the equivalence of
nested reflections and nested flips, show that Theorem~\ref{T3} follows from
Theorem~\ref{T1}.  That is, the action~(\ref{nest}) of nested flips of the
form~(\ref{M12}) agrees with the action of the double
reflection~(\ref{double}), with $\PP_1=\Gamma(\SS_\inda)$ and
$\PP_2=\Gamma(\SS_\indb)$.

\section{\boldmath The Group $\SU(2,\KKprime)$}
\label{su2}

So far we have considered transformations of the form~(\ref{paction})
and~(\ref{nest}) acting on $\PP$.  In light of the off-diagonal structure
of the matrices $\{\GG_\inda\}$, we can also consider the effect
these transformations have on $\XX$.  First, we observe that
trace-reversal of $\XX$ corresponds to conjugation in $\KK'$, that is,
\begin{equation}
\widetilde{\SS_\inda} = \star{\SS}_\inda.
\end{equation}
The matrices $\GG_\inda\GG_\indb$ then take the form
\begin{equation}
\GG_\inda\GG_\indb
  = \begin{pmatrix}
	\SS_\inda\star{\SS}_\indb&0\\
	0&\star{\SS}_\inda\SS_\indb
    \end{pmatrix}
\end{equation}
and, in particular,
\begin{equation}
\exp\left(\GG_\inda\GG_\indb\>\frac{\theta}{2}\right)
  = \begin{pmatrix}
	\exp\left( \SS_\inda\star{\SS}_\indb\>\frac{\theta}{2} \right)&0\\
\noalign{\smallskip}
	0&\exp\left( \star{\SS}_\inda\SS_\indb\>\frac{\theta}{2} \right)
    \end{pmatrix},
\end{equation}
so we can write
\begin{align}
&\exp\left(-\GG_\inda\GG_\indb\>\frac{\theta}{2}\right)
	\>\PP\>
	\exp\left(\GG_\inda\GG_\indb\>\frac{\theta}{2}\right)
	\nonumber\\
&\qquad\qquad=
    \begin{pmatrix}
	0&\exp\left( -\SS_\inda\star{\SS}_\indb\>\frac{\theta}{2} \right) \XX
	  \exp\left( \star{\SS}_\inda\SS_\indb\>\frac{\theta}{2} \right)\\
\noalign{\smallskip}
	\exp\left( -\star{\SS}_\inda\SS_\indb\>\frac{\theta}{2} \right)
		\widetilde{\XX}
	\exp\left(\SS_\inda\star{\SS}_\indb\>\frac{\theta}{2} \right)&0
    \end{pmatrix}.
\end{align}
The $4\times4$ action~(\ref{paction}) acting on $\PP$ is thus equivalent to the
action
\begin{equation}
\XX \longmapsto
	\exp\left( -\SS_\inda\star{\SS}_\indb\>\frac{\theta}{2} \right) \XX
	\exp\left( \star{\SS}_\inda\SS_\indb\>\frac{\theta}{2} \right). 
\label{xaction}
\end{equation}
on $\XX$.

\goodbreak

Transformations of the form~(\ref{nest}) are even easier, since each of
$\MM_1$ and $\MM_2$ are multiples of the identity matrix $\II$.
These transformations therefore act on $\XX$ via
\begin{equation}
\XX \longmapsto \MM_2\left(\MM_1\XX\MM_1^{-1}\right)\MM_2^{-1}
\end{equation}
where $\MM_1$, $\MM_2$ are given by~(\ref{M12}), but with $\II$
now denoting the $2\times2$ identity matrix.

Since $\XX$ is Hermitian with respect to $\KK$, and since that condition is
preserved by~(\ref{xaction}), we have realized $\SO(\kap)$ in terms of
(possibly nested) determinant-preserving transformations involving $2\times2$
matrices over $\KKprime$.  This $2\times2$ representation of $\SO(\kap)$
therefore deserves the name $\SU(2,\KKprime)$.

Further justification for the name $\SU(2,\KKprime)$ comes from the
realization that nested transformations of the form~(\ref{nest}) yield
rotations wholly within $\Im\OO$ or $\Im\OO'$.  All other rotations can be
handled without any associativity issues, yielding for instance the standard
matrix description of $\SU(2,\HH'\otimes\HH)$.  But any rotation wholly within
$\Im\OO$ or $\Im\OO'$ can be obtained as a composition of rotations in other
coordinate planes.  In this sense, what we have called $\SU(2,\KKprime)$ is
the \textit{closure} of the set of matrix transformations that preserve the
determinant of $\XX$.  Equivalently, at the Lie algebra level,
$\sa_2(\KKprime)$ is not a Lie algebra, since it is not closed.  However, its
closure is precisely the infinitesimal version of our $\SU(2,\KKprime)$.

\section{Discussion}
\label{discuss}

We have given two division algebra representations of the groups $\SO(\kap)$
that appear in the $2\times2$ magic square in Table~\ref{2x2gp}, namely the
$4\times4$ representation constructed from quadratic elements of the Clifford
algebra $\Cl(\kap)$ in Section~\ref{ortho}, and the $2\times2$ representation
$\SU(2,\KKprime)$ in Section~\ref{su2}.  Each of these representations
provides a unified description of the $2\times2$ magic square of Lie groups,
in the spirit of Vinberg's description~(\ref{Vin3}) of the Freudenthal--Tits
magic square of Lie algebras.

Our work is in the same spirit as that of Manogue and Schray~\cite{Lorentz},
leading to~(\ref{so91}), but there are some subtle differences.  In effect,
all we have done in this case (the second row of Table~\ref{2x2gp}) is to
multiply the time direction by the split complex unit $L$.  This changes very
little in terms of formal computation, but allows room for generalization by
adding additional split units, thus enlarging $\CC'$ to $\HH'$ or $\OO'$.  It
also has the advantage of turning our representation space $\{\XX\}$ into a
collection of matrices whose real trace vanishes, as is to be expected for a
representation of a unitary group.

However, unlike the transformations constructed by Manogue and Schray, our
transformations~(\ref{xaction}) do not appear to have the general form
\begin{equation}
\XX\longmapsto\MM\XX\MM^\dagger,
\end{equation}
even if we restrict the dagger operation to include conjugation in just one of
$\KK'$ or $\KK$.  This point remains under investigation, but seems a small
price to pay for a unified description of the full magic square.

Our use of \textit{nested flips} in Section~\ref{explicit} is again motivated by
the work of Manogue and Schray~\cite{Lorentz}, but yet again there are some
subtle differences.  Over $\OO$, as in~\cite{Lorentz}, the transformations
affecting the imaginary units are all rotations in $\SO(7)$; over $\OO'$, by
contrast, these transformations lie in $\SO(3,4)$, and some are boosts.  It is
straightforward to connect a flip affecting an even number of spatial
directions to the identity: Simply rotate these directions pairwise by $\pi$.
Not so for our transformations~(\ref{nest}) in the case where
$e_\inda,e_\indb\in\OO'$, since we must count separately the number of
spacelike and timelike directions affected, which could both be odd.  It would
be straightforward to expand our flips so that they act nontrivially on an
even number of spacelike directions (and therefore also on an even number of
timelike directions), and such flips would then be connected to the identity
using pairwise rotation.  However, these component flips would no longer take
the simple form~(\ref{M12}).

In future work, we hope to extend this approach to the $3\times3$ magic square
in Table~\ref{3x3gp}, and conjecture that the end result will be a unified
description of the form $\SU(3,\KKprime)$.  It appears to be
straightforward to reinterpret our previous description~(\ref{e6}) of
$E_{6(-26)}$~\cite{Denver,York,Structure,Sub} so as to also imply that
\begin{equation}
E_{6(-26)}\equiv\SU(3,\CC'\otimes\OO)
\end{equation}
but the conjectured interpretations
\begin{align}
E_{7(-25)}&\equiv\SU(3,\HH'\otimes\OO) \\
E_{8(-24)}&\equiv\SU(3,\OO'\otimes\OO)
\end{align}
would be new.

\section*{Acknowledgments}

We thank Corinne Manogue, Tony Sudbery, and Rob Wilson for helpful comments.
The completion of this paper was made possible in part through the support of
a grant from the John Templeton Foundation, and the hospitality of the
University of Denver during the 3rd Mile High Conference on Nonassociative
Mathematics.


\end{document}